\documentclass[a4paper,UKenglish,cleveref, autoref, thm-restate, nolineno]{socg-lipics-v2021}

\hideLIPIcs  

\usepackage{xspace}

\newcommand{\R}{\mathbb{R}}
\newcommand{\ER}{\exists\mathbb{R}}
\newcommand{\AS}{allowable sequences\xspace}

\DeclareMathOperator{\conv}{Conv}

\newcommand{\abs}[1]{\left\lvert #1 \right\rvert\xspace}

\renewcommand{\epsilon}{\ensuremath\varepsilon}

\renewcommand{\phi}{\ensuremath{\varphi}}

\newtheorem{longlemma}{Lemma}

\newcommand{\Naturals}{\mathbb{N}\xspace}
\newcommand{\f}[1]{\relax\ifmmode#1\else{$#1$}\fi}

\newcommand{\dimensionA}{\f{d}\xspace}
\newcommand{\dimension}{\f{d}\xspace}

\newcommand{\Grid}{\f{\Gamma}\xspace}

\newcommand{\partitionLength}{\f{n}\xspace}

\title{Splitting Sandwiches Unevenly via Unique Sink Orientations and Rainbow Arrangements  }

\author{Michaela Borzechowski}{Department of Mathematics and Computer Science, Freie Universität Berlin, Germany}{michaela.borzechowski@fu-berlin.de}{}{DFG within GRK~2434 \emph{Facets of Complexity}.}

\author{Sebastian Haslebacher}{Department of Computer Science, ETH Zürich, Switzerland}{sebastian.haslebacher@inf.ethz.ch}{https://orcid.org/0000-0003-3988-3325}{}

\author{Hung P. Hoang}{Algorithms and Complexity Group, Faculty of Informatics, TU Wien, Austria}{phoang@ac.tuwien.ac.at}{https://orcid.org/0000-0001-7883-4134}{Austrian Science Foundation (FWF, projects 10.55776/Y1329 and ESP1136425)}

\author{Patrick Schnider}{Department of Mathematics and Computer Science, University of Basel, Switzerland \\ Department of Computer Science, ETH Zürich, Switzerland}{patrick.schnider@inf.ethz.ch}{https://orcid.org/0000-0002-2172-9285}{}

\author{Simon Weber}{Department of Computer Science, ETH Zürich, Switzerland}{simon.weber@inf.ethz.ch}{https://orcid.org/0000-0003-1901-3621}{}

\authorrunning{M. Borzechowski, S. Haslebacher, H. P. Hoang, P. Schnider, and S. Weber} 

\Copyright{Michaela Borzechowski, Sebastian Haslebacher, Hung P. Hoang, Patrick Schnider, and Simon Weber} 

\ccsdesc[500]{Theory of computation~Computational geometry}
\ccsdesc[300]{Theory of computation~Problems, reductions and completeness}
\ccsdesc[300]{Mathematics of computing~Discrete mathematics} 

\keywords{$\alpha$-Ham-Sandwich Theorem, Pseudo-Hyperplanes, Arrangements, Unique Sink Orientations, Oriented Matroids} 

\category{}

\relatedversion{} 
\acknowledgements{}

\EventEditors{John Q. Open and Joan R. Access}
\EventNoEds{2}
\EventLongTitle{42nd Conference on Very Important Topics (CVIT 2016)}
\EventShortTitle{CVIT 2016}
\EventAcronym{CVIT}
\EventYear{2016}
\EventDate{December 24--27, 2016}
\EventLocation{Little Whinging, United Kingdom}
\EventLogo{}
\SeriesVolume{42}
\ArticleNo{23}

\begin{document}

\maketitle

\begin{abstract}
    The famous Ham-Sandwich theorem states that any $d$ point sets in $\mathbb{R}^d$ can be simultaneously bisected by a single hyperplane. The $\alpha$-Ham-Sandwich theorem gives a sufficient condition for the existence of biased cuts, i.e., hyperplanes that do not cut off half but some prescribed fraction of each point set. We give two new proofs for this theorem. The first proof is completely combinatorial and highlights a strong connection between the $\alpha$-Ham-Sandwich theorem and Unique Sink Orientations of grids. The second proof uses point-hyperplane duality and the Poincar\'{e}-Miranda theorem and allows us to generalize the result to and beyond oriented matroids. For this we introduce a new concept of rainbow arrangements, generalizing colored pseudo-hyperplane arrangements. Along the way, we also show that the realizability problem for rainbow arrangements is $\ER$-complete, which also implies that the realizability problem for grid Unique Sink Orientations is $\ER$-complete.
\end{abstract}

\section{Introduction}
\label{sec:intro}

The famous \emph{Ham-Sandwich theorem}, originally proven by Stone and Tukey in 1942~\cite{StoneTukey}, states that given any $d$ mass partitions or point sets in $\R^d$, there is a hyperplane that simultaneously bisects all of them. As the name suggests, this can be illustrated using the following food-based analogy: assume you have a 3-dimensional sandwich consisting of bread, ham, and cheese, that you want to share with your friend. You want to do this fairly, meaning that both of you get exactly half of the bread, half of the ham, and half of the cheese. The Ham-Sandwich theorem now says that you can always get a fair division with a single straight cut, no matter how you assembled the sandwich\footnote{In his book ``Algorithms in Combinatorial Geometry'', Edelsbrunner illustrates this by saying that such a cut can be found even if the cheese is still in the fridge \cite{EdelsbrunnerBook}.}.

The Ham-Sandwich theorem has initiated the study of \emph{mass partitions}, where the general question is how many mass distributions or point sets can be simultaneously bisected with some specific type of cut. Studied variants include partitions with several hyperplanes \cite{Barba2019,Blagojevic2022,Hubard2020,Hubard2024,Schnider2021}, partitions using general convex sets \cite{Aichholzer:2018gu,Akopyan:2013jt,Blagojevic:2007ij}, partitions using fans and cones \cite{Barany:2002tk,Schnider2019,Soberon2023}, or partitions using fixed shapes \cite{cookie_cutters}. For more information on mass partitions, we refer to the recent survey by Rold\'{a}n-Pensado and Sober\'{o}n \cite{RoldanPensado2022}.

If you like cheese significantly more than your friend does, you might not want to share your sandwich fairly, but you perhaps want to have more of the cheese. A natural question is which such biased partitions are still possible with just a single cut. The \emph{$\alpha$-Ham-Sandwich theorem} gives a sufficient condition for when such cuts are possible. In particular, it states that if your ingredients are \emph{well-separated}, then any biased cut is possible. Formally, well-separation is defined as follows (see also Figure \ref{fig:well-separation} for an illustration).

\begin{definition}
    A family $\mathcal{F}=(A_1,\ldots,A_d)$ of $d$ point sets (or mass distributions) is said to be \emph{well-separated} if for any index set $I \subset [d]$,\footnote{Herein and henceforth, $[d]$ refers to the set $\{1, \dots, d\}$.} there exists a hyperplane that separates the convex hull of (the support of) $\bigcup_{i\in I}A_i$ from the convex hull of (the support of) $\bigcup_{i\in [d]\setminus I}A_i$.
\end{definition}

\begin{figure}[ht]
    \centering
    \includegraphics[width=0.5\linewidth]{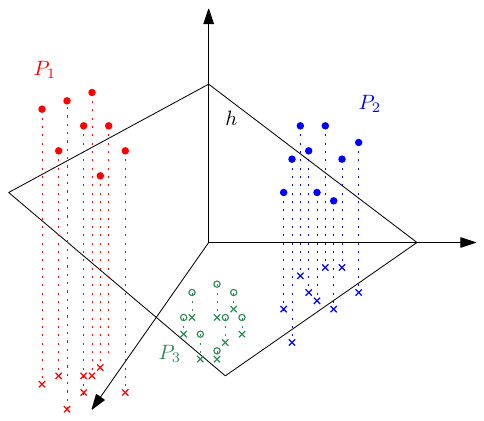}
    \caption{A family of well-separated point sets $\mathcal{F} = (P_1, P_2, P_3)$ in $\R^3$. The hyperplane $h$ separates $P_1\cup P_2$ from $P_3$.
    }
    \label{fig:well-separation}
\end{figure}

The $\alpha$-Ham-Sandwich theorem has been shown both for mass distributions, by B\'{a}r\'{a}ny, Hubard, and Jer\'{o}nimo \cite{baranySlicingConvexSets2008}, as well as for finite point sets, by Steiger and Zhao \cite{steigerGeneralizedHamSandwichCuts2010}. In this manuscript, we focus on the discrete variant for point sets, so we only formally state this version here. However, before getting there, let us remark that we orient a hyperplane $h\in\R^d$ defined by points $p_1\in P_1,\ldots,p_d\in P_d$ according to the orientation of the simplex $(p_1,\ldots,p_d)\in h$, that is, a point $q$ is below $h$ iff the following $(d+1) \times (d+1)$ matrix $A$ has a negative determinant: $A_{j,d+1} = 1$ for all $j \in [d+1]$, $A_{i, \cdot}= p_i$ for all $i \in [d]$ and $A_{d+1,\cdot}=q$. If the matrix $A$ has a positive determinant we say that $q$ is above $h$. If $q$ is on $h$, then $A$ has determinant 0.

With this, we can now formally define $\alpha$-cuts (see also Figure \ref{fig:cuts}) and state the discrete $\alpha$-Ham-Sandwich theorem.

\begin{definition}
    Let $\mathcal{P}=(P_1\ldots,P_d)$ be well-separated and in weak general position\footnote{Weak general position means that any hyperplane containing one point of each of the sets $P_1, \dots, P_d$ cannot contain any other points of $\mathcal{P}$.}.    
    Given $(\alpha_1, \dots, \alpha_d) \in [|P_1|] \times \dots \times [|P_d|]$, an \emph{$(\alpha_1, \dots, \alpha_d)$-cut} is a hyperplane $h_\alpha$ such that for each $i\in[d]$, one point of $P_i$ is on $h_{\alpha}$ and exactly $\alpha_i - 1$ points of $P_i$ are below it.
\end{definition}

\begin{restatable}{theorem}{alphaHS}\label{thm:alphaHS}
Let $\mathcal{P}=(P_1\ldots,P_d)\subset\R^d$ be well-separated and in weak general position.
Then for every $(\alpha_1, \dots, \alpha_d) \in [|P_1|] \times \dots \times [|P_d|]$, there exists a unique $(\alpha_1, \dots, \alpha_d)$-cut.
\end{restatable}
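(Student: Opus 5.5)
We prove existence and uniqueness together, using the combinatorial structure of the grid $\Grid=[|P_1|]\times\dots\times[|P_d|]$ of transversals. Every $d$-tuple $(p_1,\dots,p_d)\in P_1\times\dots\times P_d$ spans a unique oriented hyperplane $h(p_1,\dots,p_d)$. Affine independence of such tuples follows from well-separation: if the $p_i$ lay in a $(d-2)$-flat, some partition $I$ of the indices would give intersecting convex hulls, by a Radon-type argument.

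For each tuple we record the vector $\beta=(\beta_1,\dots,\beta_d)$, where $\beta_i-1$ is the number of points of $P_i$ below $h$. The theorem states that the map from tuples with $\beta$ matching the positions of the $p_i$ is a bijection onto $\Grid$. Put differently: for each $\alpha\in\Grid$, exactly one tuple $(p_1,\dots,p_d)$ has each $p_i$ at position $\alpha_i$ in the ``below'' order induced by its own hyperplane.

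The plan is to orient the grid so that the $\alpha$-cuts become sinks of a unique sink orientation. Fix $\alpha$. For a tuple $v=(p_1,\dots,p_d)$ and a coordinate $i$, consider the line of tuples obtained by replacing $p_i$ with other points of $P_i$. The key geometric lemma is the following. If the $p_j$, $j\neq i$, are fixed and $q$ ranges over $P_i$, the hyperplanes $h(p_1,\dots,q,\dots,p_d)$ all contain the $(d-2)$-flat $F$ spanned by the other points. Their rotation about $F$ orders $P_i$ linearly, since by well-separation $F$ misses $\conv(P_i)$. In this order exactly one $q$ has exactly $\alpha_i-1$ points of $P_i$ below its hyperplane; it is the $\alpha_i$-th point in the rotational order.

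We orient each edge of the $i$-th line toward that point. Each 1-dimensional face then has a unique sink. For each face of the grid, i.e.\ each choice of fixed coordinates, we need to show that it also has a unique sink. A sink of the whole grid is exactly an $\alpha$-cut, so a unique global sink gives both existence and uniqueness.

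The main obstacle is verifying the unique sink property on higher-dimensional faces. I would establish it by induction on face dimension, using the characterization that an orientation of a grid is a USO iff every face has a unique sink. Equivalently, it suffices to check that any two distinct vertices of a face are distinguished by some coordinate in which their outgoing edges point in a consistent ``different'' way, the grid analogue of the Szabó--Welzl condition. Concretely, for a face fixing coordinates outside $J\subseteq[d]$, a sink corresponds to a hyperplane through the fixed points that is an $\alpha_J$-cut for the sets $P_j$, $j\in J$. After projecting along the flat spanned by the fixed points, this is a lower-dimensional $\alpha$-Ham-Sandwich instance, and the projected sets remain well-separated. Induction on dimension therefore yields exactly one sink per face.

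The base case $d=1$ is trivial, since the cut is the $\alpha_1$-th point. I expect the care to lie in the projection step. There we must check that well-separation and weak general position survive the central projection from the fixed flat, and that the orientation of the projected hyperplanes agrees with the original simplex orientation. This reduces to a determinant expansion along the fixed rows.
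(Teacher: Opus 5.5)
Your induction on dimension never reaches the case you actually need. For a face that fixes at least one coordinate, projecting from the flat spanned by the fixed points plausibly gives a lower-dimensional well-separated instance, so you may well get a unique sink there. But the whole grid is itself a face ($J=[d]$). There nothing is fixed and there is no flat to project from. As you note yourself, its unique sink is exactly the $\alpha$-cut, so the top-dimensional case is the theorem in dimension $d$, and nothing in the proposal proves it.

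Unique sinks on all proper faces do not force a unique sink on the whole grid. An orientation of the $2\times 2$ grid has a unique sink on every edge, yet it can be a directed $4$-cycle or have two sinks. The Szab\'o--Welzl-type pairwise criterion you allude to does not help either. For two vertices that differ in every coordinate, the relevant subcube $\{a_1,b_1\}\times\dots\times\{a_d,b_d\}$ has no fixed point to project from, so checking it is again a genuinely $d$-dimensional statement. This is the very step that, as the paper's appendix explains, is missing in Steiger and Zhao's inductive proof: uniqueness of lower-dimensional semi-cuts does not by itself yield existence or uniqueness of full cuts.

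Some genuinely $d$-dimensional input is required at the top level. Existence for every $\alpha$ and uniqueness for every $\alpha$ are equivalent by pigeonhole, since each colorful hyperplane is a $\beta$-cut for exactly one $\beta$. So one of them suffices, but it must be proved directly.

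The paper uses the $\alpha$-independent orientation $\sigma_{\mathcal P}$. There each line is ordered by your rotational order, so the outdegree in dimension $i$ counts the points of $P_i$ below the hyperplane. It then applies the continuous $\alpha$-Ham-Sandwich theorem of B\'ar\'any, Hubard and Jer\'onimo twice:
\begin{itemize}
\item with all $\alpha_i=0$ to convex hulls of subsets, giving a sink in every induced subgrid;
\item to the segments spanned by two points per color, giving all outmaps in every subcube.
\end{itemize}
Lemma~\ref{lem:GridUSO} then makes $\sigma_{\mathcal P}$ a grid USO, and bijectivity of the outdegree map settles all $\alpha$ at once. The second proof instead gets existence from Poincar\'e--Miranda and uniqueness by exactly this counting.

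Two smaller points:
\begin{itemize}
\item As written, you only orient the edges of each line incident to its designated point, so you do not yet have an orientation of the grid.
\item A grid USO requires unique sinks in all induced subgrids $N_1\times\dots\times N_d$, not just in faces obtained by fixing coordinates. For your conclusion, though, only the top face matters.
\end{itemize}
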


\begin{figure}[ht]
    \centering
    \includegraphics[width=0.5\linewidth]{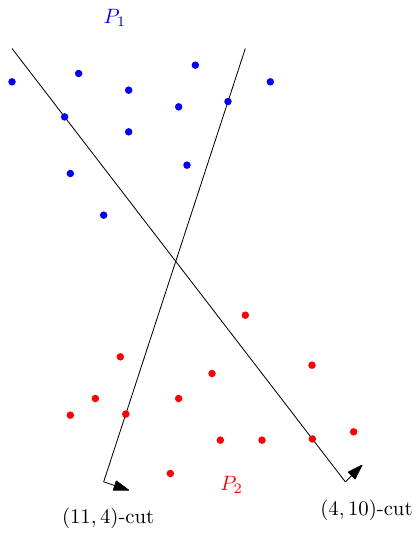}
    \caption{Well-separated point sets in $\R^2$ with two $\alpha$-cuts.}
    \label{fig:cuts}
\end{figure}

The proof of the continuous version for mass distributions by B\'{a}r\'{a}ny, Hubard, and Jer\'{o}nimo \cite{baranySlicingConvexSets2008} uses Brouwer's fixpoint theorem. On the other hand, the proof of the discrete version for point sets by Steiger and Zhao \cite{steigerGeneralizedHamSandwichCuts2010} uses an elegant inductive argument to show that every $\alpha$-cut is unique and then deduces the existence from the pigeonhole principle. However, we think that there is a gap in their inductive argument, which we elaborate on in Appendix~\ref{app:gap}. While it is possible that their proof can be fixed using some additional arguments, we are taking a different route and instead provide two new proofs of the $\alpha$-Ham-Sandwich theorem for point sets.

The first proof, presented in Section \ref{sec:uso_proof}, uses the combinatorial framework of \emph{Unique Sink Orientations}.
A Unique Sink Orientation (USO) is an orientation of the edge set of a hypercube such that every subcube has a unique sink. USOs were formally defined by Szab{\'o} and Welzl \cite{szabo2001usos}, based on earlier work by Stickney and Watson~\cite{stickney1978digraph} who investigated USOs as a combinatorial abstraction of the candidate solutions of a special class of linear complementarity problems.
The concept of USOs has been generalized to \emph{grids}, which are products of complete graphs \cite{gaertner2008grids}. A \emph{grid USO} is an orientation of a grid where every subgrid has a unique sink.

Our second new proof of the $\alpha$-Ham-Sandwich theorem, presented in Section \ref{sec:rainbow}, considers the setting under the well-known point-hyperplane duality: given $d$ sets of hyperplanes with a dual notion of ``well-separated'', we prove the existence of a point that lies above or below the correct number of hyperplanes in each set. In fact, we present a more general result, replacing the arrangement of hyperplanes by a \emph{rainbow arrangement}, a concept that we formally introduce in Section \ref{sec:rainbow_definition}. Informally, a colored generalized arrangement is a family $\mathcal{F}=(H_1, \ldots, H_d)$, where each $H_i$ is a set of pseudo-hyperplanes in $\R^d$. In a rainbow arrangement, we further require that any colorful choice of pseudo-hyperplanes $h_1\in H_1,\ldots,h_d\in H_d$ intersect in a single point. The main result of Section \ref{sec:rainbow_proof} is the following theorem.

\begin{restatable}{theorem}{rainbowHS}\label{thm:rainbowHS}
Let $\mathcal{F}=(H_1, \ldots,H_d)\subset\R^d$ be a well-separated colored generalized arrangement where each color class $H_i$ has size $n_i$. Then for every $(\alpha_1, \dots, \alpha_d) \in [n_1] \times \dots \times [n_d]$, there is a point $x_{\alpha} \in \mathbb{R}^d$ lying on one and above exactly $\alpha_i - 1$ pseudo-hyperplanes of color $i$ for all $i \in [d]$.
If $\mathcal{F}$ is additionally a rainbow arrangement, then the point $x_\alpha$ is unique.
\end{restatable}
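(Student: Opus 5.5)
The plan is to use the Poincaré--Miranda theorem for existence and a degree (or parity) count for uniqueness. The paper does not yet fix the precise notions of pseudo-hyperplane, well-separation and rainbow arrangement, so I read them as follows. Each pseudo-hyperplane $h$ is the zero set of a continuous function $f_h:\R^d\to\R$ with $h$ homeomorphic to $\R^{d-1}$; $f_h>0$ means ``above'' and $f_h<0$ means ``below''. Well-separation of $\mathcal{F}$ is the dual condition: for each index set $I\subseteq[d]$ there is a direction, i.e.\ an unbounded region or ``point at infinity'', in which all pseudo-hyperplanes of colors in $I$ lie below the points and all those of colors in $[d]\setminus I$ lie above. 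Combinatorially, the $2^d$ sign patterns $(\pm,\dots,\pm)$, one sign per color class, are all realized far out, arranged like the vertices of a cube.

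\textbf{Existence.} Fix $\alpha$. For $x\in\R^d$ and color $i$, let $a_i(x)$ be the number of pseudo-hyperplanes of $H_i$ strictly below $x$, meaning that $x$ is above them. To get continuous functions, order the pseudo-hyperplanes of $H_i$ at $x$ by the values $f_h(x)$ and let $g_i(x)$ be the $\alpha_i$-th smallest value $f_h(x)$ over $h\in H_i$. This is continuous, being an order statistic of continuous functions. Moreover, $g_i(x)=0$ means that some $h\in H_i$ passes through $x$ and, after fixing the counting of values equal to $0$ via weak general position, exactly $\alpha_i-1$ pseudo-hyperplanes of $H_i$ lie below $x$.

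Well-separation should give a large region $C$, homeomorphic to the cube $[-1,1]^d$, whose facets have the following properties. On the facet $F_i^+$, every $h\in H_i$ lies below, so $g_i>0$. On the opposite facet $F_i^-$, every $h\in H_i$ lies above, so $g_i<0$. The other colors are unconstrained on these facets. To build $C$, I would take the $2^d$ separating directions as the corners of $C$ and use the fact that each pseudo-hyperplane of color $i$ separates the corners by their $i$-th sign. Then on each facet defined by fixing the $i$-th sign, no pseudo-hyperplane of color $i$ meets that facet. Poincaré--Miranda then yields a point $x\in C$ with $g_i(x)=0$ for all $i$, which is the desired point $x_\alpha$.

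\textbf{Uniqueness for rainbow arrangements.} Every candidate $x_\alpha$ is a colorful intersection point $h_1\cap\dots\cap h_d$, and by the rainbow condition each such intersection is a single point. This leaves $\prod n_i$ colorful vertices, and each vertex $v$ has a well-defined vector $(a_1(v)+1,\dots,a_d(v)+1)\in[n_1]\times\dots\times[n_d]$. By existence, the map from colorful vertices to index vectors is surjective. Domain and codomain have the same size $\prod_i n_i$, so the map is a bijection and uniqueness follows by pigeonhole, just as in Steiger and Zhao's existence argument, only run in reverse. The one subtlety is that each colorful tuple must give exactly one vertex, and a vertex on $h_i$ must have $h_i$ itself as the counted pseudo-hyperplane. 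Both hold because each $x_\alpha$ lies on exactly one pseudo-hyperplane per color, by weak general position.

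\textbf{Main obstacle.} The hardest step is constructing the cube $C$ with the facet sign conditions from the abstract well-separation definition. Pseudo-hyperplanes are only topological objects, so ``far away in a direction'' must be made precise. I would try first to compactify using the structure at infinity, the pseudo-hyperplanes extending to a sphere arrangement, and to build $C$ inductively: take a cell complex of the regions determined by each color class, and obtain the facets as preimages of the sign of $g_i$ together with a retraction argument.
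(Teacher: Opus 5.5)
\textbf{Verdict: same approach as the paper, with a genuine gap at the cube construction.} Your plan matches the paper's: apply Poincar\'e--Miranda on a cube whose corners are the well-separation witnesses $x_s$, then use the pigeonhole count over the $n_1\cdots n_d$ colorful intersection points for uniqueness. You carry out the uniqueness part exactly as the paper does.

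\textbf{The gap.} It is the step you flag yourself. From the corners alone it does not follow that ``no pseudo-hyperplane of color $i$ meets that facet.'' The corners of the facet $x_i=1$ lie above every $h\in H_i$. But a facet spanned between them may cross some $h\in H_i$ unless you deliberately place it inside the region above all of $H_i$. That requires the relevant corners to be joinable inside that region, and the lower-dimensional faces to be fillable there.

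The missing idea is the paper's. Witnesses $x_s$ whose sign vectors agree on an index set $I$ lie on the same side of every pseudo-hyperplane of a color in $I$. From this the paper concludes that they lie in a common unbounded cell of the subarrangement $\mathcal{F}_I=\bigcup_{i\in I}H_i$. It then builds the cube face by face, mapping the face with fixed coordinates in $I$ into that cell. For $I=\{i\}$ this puts the facet $x_i=\pm1$ entirely on the $\pm$ side of all of $H_i$.

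Your proposed route through a compactification and a sphere arrangement at infinity is not supported by the hypotheses. A generalized arrangement need not extend to anything of that kind; its pseudo-hyperplanes may even meet in disconnected sets.

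It may also help that $C$ need not be an embedded region. A continuous map $\phi:[-1,1]^d\to\R^d$ satisfying the facet conditions suffices: apply Poincar\'e--Miranda to $g_i\circ\phi$.

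\textbf{Smaller corrections.}
\begin{itemize}
\item \emph{Order statistic is reversed.} Take $f_h(x)>0$ to mean $x$ is above $h$, and let $x$ lie on exactly one $h\in H_i$. Then the $\alpha_i$-th smallest value vanishes exactly when $x$ is above $n_i-\alpha_i$ members of $H_i$. So you want the $\alpha_i$-th largest value. The slip is harmless, since $\alpha_i\mapsto n_i+1-\alpha_i$ permutes $[n_i]$.
\item \emph{Wrong justification for ``exactly one per color.''} This does not come from weak general position, which is not a hypothesis here. It follows from the definition of a generalized arrangement: any $d+1$ pseudo-hyperplanes have empty intersection, and a common zero already lies on at least one pseudo-hyperplane of each of the $d$ colors.
\end{itemize}
With these fixes, your order-statistic functions are a clean replacement for the paper's signed distance to the $\alpha_i$-level.
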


Using the topological representation theorem, \Cref{thm:rainbowHS} allows us to deduce a version of the $\alpha$-Ham-Sandwich theorem for oriented matroids (\Cref{cor:matroids}).

In Section \ref{sec:generalized_separation}, we generalize \Cref{thm:rainbowHS} even further by relaxing the well-separation condition to the notion of \emph{$(\beta,\gamma)$-separation}, which we introduce in the same section.

Finally, our notion of rainbow arrangements can be viewed as a higher-dimensional generalization of \emph{bicolored order types}, introduced by Aichholzer and Br\"{o}tzner~\cite{aichholzerBicoloredOrderTypes2024}. In Section~\ref{sec:er_hardness}, we show that deciding whether a rainbow arrangement can be realized by a hyperplane arrangement is $\ER$-complete already in two dimensions (i.e., for bicolored order types, see \cref{theorem:er-hardness}). Given the connection between the $\alpha$-Ham-Sandwich theorem and grid USOs, and based on recent results by Borzechowski, Fearnley, Gordon, Savani, Schnider, and Weber~\cite{borzechowskiTwoChoicesAre2024}, this allows us to conclude that deciding whether a grid USO is realizable is also $\ER$-complete (see \cref{corollary:grid_uso_hard}).

\section{First Proof: Unique Sink Orientations}
\label{sec:uso_proof}

A grid graph is a generalization of the $d$-dimensional hypercube: 
While the latter is the Cartesian product of $d$ copies of $K_2$ (the complete graph on two vertices), the former is the Cartesian product of complete graphs of arbitrary size.
Formally, a $\dimensionA$-dimensional \emph{grid graph}~$\Grid$ parameterized by $\partitionLength_1, \dots, \partitionLength_\dimensionA \in \Naturals_{\geq 2}$ is the graph on $[\partitionLength_1] \times \dots \times [\partitionLength_\dimensionA]$, where two vertices are adjacent if and only if they differ in exactly one coordinate.
We say the grid has $\dimensionA$ \emph{dimensions} and each dimension $i$ has $n_i$ \emph{directions}.

The subgraph $\Grid'$ of $\Grid$ induced by the vertices $V(\Grid') = N_1 \times \dots \times N_\dimension$ for non-empty $N_i \subseteq [\partitionLength_i]$ 
is called an \emph{induced subgrid} of $\Grid$.
Note that if we have $\abs{N_i}=1$ for some $i$, then the induced subgrid loses a dimension.
If $\abs{N_i} \leq 2$ for all $i \in [d]$, we say that $\Grid'$ is a \emph{subcube} of~$\Grid$.
An orientation of the edges of a grid graph is called a \emph{Unique Sink Orientation (USO)} if every induced subgrid has a unique sink (i.e., a unique vertex that has no outgoing edges)~\cite{gaertner2008grids}.

\begin{lemma}\label{lem:GridUSO}
Let $\sigma$ be a grid orientation. Assume that every induced subgrid has a (not necessarily unique) sink, and that $\sigma$ is a USO when restricted to any subcube. Then $\sigma$ is a USO.
\end{lemma}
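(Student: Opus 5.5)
The plan is a direct contradiction argument that reduces uniqueness of sinks in an arbitrary induced subgrid to uniqueness of sinks in a suitable subcube. By hypothesis every induced subgrid has at least one sink, so only uniqueness needs proof. Fix an induced subgrid $\Grid'$ with vertex set $N_1 \times \dots \times N_\dimension$. Suppose, for contradiction, that it has two distinct sinks $u = (u_1,\dots,u_\dimension)$ and $v = (v_1,\dots,v_\dimension)$.

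Next I will define the subcube spanned by $u$ and $v$. It is the subgraph $C$ induced by $M_1 \times \dots \times M_\dimension$ with $M_i = \{u_i, v_i\}$. Since $u_i, v_i \in N_i$, we have $M_i \subseteq N_i$, so $C$ is an induced subgrid of $\Grid'$ and hence of $\Grid$. Moreover $\abs{M_i} \leq 2$ for all $i$, so $C$ is a subcube in the sense defined above. Because $u \neq v$, at least one $M_i$ has size $2$, so $C$ has at least two vertices. This does not affect the argument, since the hypothesis gives unique sinks on all subcubes, including degenerate ones.

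The key observation is that being a sink passes down to induced subgraphs that contain the vertex. Every edge of $C$ at $u$ is an edge of $\Grid'$ at $u$, and it carries the same orientation under $\sigma$. Since $u$ has no outgoing edge in $\Grid'$, it has none in $C$, so $u$ is a sink of $C$. The same argument shows that $v$ is a sink of $C$. Thus $C$ has two distinct sinks. But $\sigma$ restricted to $C$ is a USO, so in particular $C$ itself has a unique sink, a contradiction. Hence every induced subgrid has exactly one sink, and $\sigma$ is a USO.

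There is no real obstacle here. The only point requiring care is to confirm that the spanned subcube genuinely lies inside $\Grid'$, which holds because $\Grid'$ is a product set, and that it counts as a subcube under the paper's definition, which allows $\abs{M_i} = 1$.
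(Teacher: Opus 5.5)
Your proof is correct and follows essentially the same approach as the paper. Both arguments pass to the subcube spanned by the two putative sinks, observe that they remain sinks there, and contradict the USO property of that subcube. Your extra checks (that the spanned subcube lies inside $\Gamma'$ and that coordinates with $|M_i| = 1$ are allowed) just make explicit what the paper leaves implicit.
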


\begin{proof}
As every subgrid has a sink by assumption, it remains to show that this sink is unique. Assume for the sake of contradiction that there is a subgrid that has two sinks $a=(a_1,\ldots,a_d)$ and $b=(b_1,\ldots,b_d)$. 

Consider now the subcube spanned by $a$ and $b$ (i.e., the subcube induced by $\{a_1, b_1\} \times \dots \times \{a_d, b_d\}$). As $a$ and $b$ were sinks in the subgrid, they are also sinks in this subcube. We have thus found a subcube with two sinks, so $\sigma$ restricted to this subcube is not a USO. This is a contradiction to the second assumption.
\end{proof}

Let $\mathcal{P} = (P_1, \dots, P_d) \subset \R^d$ be a well-separated point set in weak general position.
For $i \in [d]$, let $P_i = \{p^i_1, \dots, p^i_{|P_i|} \}$.

Let $\Grid_{\mathcal{P}}$ be the grid graph parameterized by $|P_1|, \dots, |P_d|$.
We recall the following orientation $\sigma_{\mathcal{P}}$ on $\Grid_{\mathcal{P}}$, which was presented in~\cite{borzechowskiTwoChoicesAre2024} and is illustrated in Figure \ref{fig:uso}.
For any edge between two vertices $v = (a_1, \dots, a_{i-1}, a_i, a_{i+1}, \dots, a_d)$ and $v' = (a_1, \dots, a_{i-1}, a'_i, a_{i+1}, \dots, a_d)$ of $\Grid_{\mathcal{P}}$, let $h$ be the colorful hyperplane spanned by the points $p^1_{a_1},\ldots,p^{i - 1}_{a_{i-1}},p^i_{a_i},p^{i + 1}_{a_{i+1}},\ldots,p^d_d$, and let $h'$ be the colorful hyperplane spanned by $p^1_{a_1},\ldots,p^{i - 1}_{a_{i-1}},p^i_{a'_i},p^{i + 1}_{a_{i+1}},\ldots,p^d_d$.
If $p^i_{a'_i}$ lies above $h$, we orient $v'$ towards $v$; otherwise orient $v$ towards $v'$. 
It turns out that $p^i_{a'_i}$ lies above $h$ if and only if $p^i_{a_i}$ lies below the hyperplane $h'$,\footnote{This can be seen by projecting to $\R^2$ such that $h \cap h'$ is mapped to the origin.} so this orientation is well-defined.

\begin{figure}[ht]
    \centering
    \includegraphics[width=0.9\linewidth]{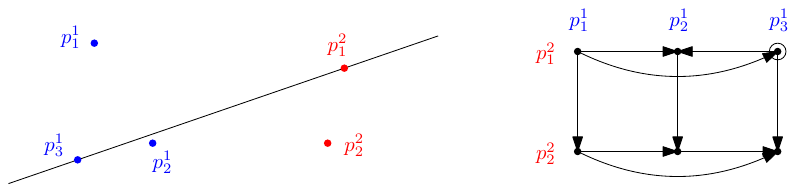}
    \caption{Well-separated point sets in $\R^2$ with the corresponding grid orientation. The line drawn on the left is represented by the highlighted vertex on the right.}
    \label{fig:uso}
\end{figure}

The following lemma is key to our first proof of \cref{thm:alphaHS}.

\begin{lemma}\label{lem:OrientationIsUSO}
Let $\mathcal{P}=(P_1,\ldots,P_d)\subset \mathbb{R}^d$ be well-separated and in weak general position. Then the orientation $\sigma_{\mathcal{P}}$ is a grid USO.
\end{lemma}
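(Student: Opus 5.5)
We will verify the two hypotheses of Lemma~\ref{lem:GridUSO}: that every induced subgrid of $\Grid_{\mathcal{P}}$ has a sink under $\sigma_{\mathcal{P}}$, and that $\sigma_{\mathcal{P}}$ restricted to every subcube is a USO. Both properties are inherited by sub-families: an induced subgrid $N_1\times\dots\times N_d$ is exactly the grid $\Grid_{\mathcal{P}'}$ with orientation $\sigma_{\mathcal{P}'}$ for $\mathcal{P}'=(P'_1,\dots,P'_d)$, $P'_i=\{p^i_j : j\in N_i\}$. Well-separation and weak general position pass to subsets. So it suffices to show two things for an arbitrary well-separated family in weak general position: the whole grid has a sink, and the orientation is a USO whenever all $|P_i|\le 2$.

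\textbf{Existence of a sink.} A vertex $v=(a_1,\dots,a_d)$ with colorful hyperplane $h_v$ through $p^1_{a_1},\dots,p^d_{a_d}$ is a sink exactly when, for every $i$ and every $a'_i\neq a_i$, the point $p^i_{a'_i}$ lies above $h_v$. By the orientation rule, such a point lying below $h_v$ would give an edge out of $v$. So a sink is precisely a $(1,\dots,1)$-cut: a colorful hyperplane with all remaining points of every $P_i$ above it.

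To find one, we use a geometric extremal argument. Well-separation implies that the convex hulls of the $P_i$ are affinely in general position in the transversal sense, and a common tangent hyperplane touching every $\conv(P_i)$ with all sets on one side exists. A direct route is to minimize a suitable linear functional over the set of colorful hyperplanes. An alternative is to take a common supporting hyperplane of the convex hulls, which exists by the standard result for well-separated convex bodies (the $2^d$ common tangents, Cappell et al.\ / B\'ar\'any--Hubard--Jer\'onimo). We then pick the one whose side matches the orientation convention; the simplex orientation determines which of the tangents corresponds to ``all points above''. Weak general position ensures that it touches exactly one point of each $P_i$.

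\textbf{Subcubes.} We need the $2^d$ colorful hyperplanes of $P_i=\{x_i,y_i\}$ to induce a cube USO. We would use the Szab\'o--Welzl characterization: for every two distinct vertices $u,w$ there is a coordinate $i$ in which they differ and along which the edges at $u$ and $w$ in direction $i$ are oriented ``consistently'', i.e.\ $(u\oplus w)\cap(s(u)\oplus s(w))\neq\emptyset$ for the outmap $s$. Equivalently, we may rule out two sinks in every subcube and use the hereditary existence of a sink just shown. By the existence argument, a sub-subcube has a sink; if a face had two sinks $u,w$, these would be two distinct colorful hyperplanes $h_u,h_w$, each with the other points of its color classes above it. Restricting to the coordinates $I$ where $u,w$ differ gives two distinct common tangents of the same type to the two-point sets $\{x_i,y_i\}$ for $i\in I$, together with fixed points for $i\notin I$. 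This contradicts the uniqueness of a common tangent of given type for well-separated families, a fact we would prove via a sign-of-determinant computation. That computation is the main obstacle.

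For that step, first try a projection. Project along the flat through the shared points, i.e.\ quotient by the affine span of the common points for $i\notin I$. This preserves well-separation and lowers the dimension to $|I|$. We then argue by induction on $|I|$ that two tangents of the same type would let us find a hyperplane, namely an affine combination of $h_u$ and $h_w$, that fails to separate some partition $\bigcup_{i\in J}P_i$ versus the rest. This would contradict well-separation.
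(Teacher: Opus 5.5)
\textbf{Verdict: genuine gap in the subcube step.} Your reduction to \cref{lem:GridUSO}, the identification of sinks with $(1,\dots,1)$-cuts, and the existence of a sink via the common-tangent theorem of B\'ar\'any, Hubard, and Jer\'onimo (\cref{thm:alpha_mass}) all match the paper. The gap is in the subcube step.

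There you reduce to a uniqueness claim: for a well-separated family of two-point and one-point sets, the all-above common tangent is unique. You do not prove it. The determinant computation is explicitly left open, and the sketched substitute does not work as written:
\begin{itemize}
\item \emph{The final step is a non sequitur.} You conclude that some affine combination of $h_u$ and $h_w$ fails to separate some partition, and call this a contradiction with well-separation. But well-separation only asserts that \emph{some} separating hyperplane exists for each partition, so one non-separating hyperplane contradicts nothing. You would have to show that the convex hulls of the two parts actually meet. In the plane, for instance, two distinct all-above tangents force $\conv(Q_1)$ and $\conv(Q_2)$ to be the crossing diagonals of a convex quadrilateral; you give no higher-dimensional analogue.
\item \emph{The projection claim is unsupported.} You assert, without argument, that projecting from the flat spanned by the shared points preserves well-separation.
\item \emph{This is the hard part.} An inductive uniqueness argument of this kind is exactly what the paper identifies as gappy in Steiger--Zhao (\cref{app:gap}), and it is the genuinely difficult part of the statement.
\end{itemize}

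\textbf{How to close it.} You do not need the uniqueness computation at all; a counting argument with the result you already cite suffices. Take a $k$-dimensional face, spanned by segments $\conv\{x_i,y_i\}$ for $i\in I$ and shared singletons for $i\notin I$. The common-tangent theorem gives a tangent of \emph{every} one of the $2^k$ types.
\begin{itemize}
\item By weak general position, each such tangent passes through exactly one endpoint of each segment. If it contained a whole segment, it would contain a point of every color plus an extra point.
\item So each tangent is a vertex of the face, and its type is exactly that vertex's outmap.
\item The face has only $2^k$ vertices and all $2^k$ outmaps occur, so the outmap is a bijection. In particular the face has exactly one sink.
\end{itemize}
Doing this for every face gives the subcube hypothesis of \cref{lem:GridUSO}. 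This is precisely the paper's argument. Alternatively, you can invoke the uniqueness clause of \cref{thm:alpha_mass} directly.
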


Note that this lemma was already proved in~\cite{borzechowskiTwoChoicesAre2024}.
However, their proof uses \cref{thm:alphaHS}.
To avoid the cyclic dependency, we provide here an alternative proof that uses the following version of the $\alpha$-Ham-Sandwich theorem for mass distributions.

\begin{theorem}[B\'{a}r\'{a}ny, Hubard, and Jer\'{o}nimo \cite{baranySlicingConvexSets2008}]
\label{thm:alpha_mass}
    Let $K_1, \dots, K_d$ be well-separated convex bodies in $\R^d$, and $\alpha_1, \dots, \alpha_d$ given constants with $0 \leq \alpha_i \leq 1$ for $i \in [d]$.
    Then there is a unique hyperplane $h$ such that for all $i \in [d]$, $h$ intersects $K_i$, and the proportion of the volume of $K_i$ below $h$ is $\alpha_i$.
\end{theorem}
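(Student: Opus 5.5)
I would prove \Cref{thm:alpha_mass} in two parts. First, uniqueness follows from well-separation by a planar projection argument. Second, existence follows from a continuity/degree argument on the space of hyperplanes transversal to all $K_i$. Throughout, I use well-separation in one specific form: no $(d-2)$-flat meets the convex hulls of all $K_i$ "in a crossing way". More precisely, there is no affine $(d-2)$-flat $L$ such that every $K_i$ has points strictly on both sides of some hyperplane through $L$ in two opposite wedges. I plan to derive this from the definition by a Radon/Carath\'eodory-type argument.

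\textbf{Uniqueness.} Suppose $h \neq h'$ both cut off the proportions $\alpha_1,\dots,\alpha_d$ and both meet every $K_i$. If $h \parallel h'$, then some $K_i$ has positive volume strictly between them, which contradicts equal proportions below. Otherwise $L = h \cap h'$ is a $(d-2)$-flat. Project along $L$ onto a plane, so that $h,h'$ become two lines through the origin, dividing the plane into four wedges. Since $K_i$ has equal volume below $h$ and below $h'$, the wedge "below $h$, above $h'$" and the opposite wedge "above $h$, below $h'$" carry equal volume of $K_i$. If that volume is zero, then $K_i$ lies in the other double wedge while meeting both lines; then by convexity $K_i$ meets $L$. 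If it is positive, $K_i$ has points in two opposite wedges, so again by convexity the projection of $K_i$ contains a segment through a small neighbourhood of the origin's double wedge. In either case every $K_i$ reaches across $L$ in the same two opposite wedges. Now partition the indices according to which side of $L$ (in the projection) the relevant pieces lie on. I would then check that no hyperplane can separate $\bigcup_{i\in I}K_i$ from the rest for a suitable $I$, contradicting well-separation. I expect making this choice of $I$ precise to be the main obstacle. My first attempt would be $I=\emptyset$ versus a single-colour split, using that $d$ convex sets all crossing the $(d-2)$-flat $L$ give $d$ points of $\bigcup \conv$ whose affine hull has dimension at most $d-1$. That would contradict the affine independence forced by well-separation: well-separated families admit no hyperplane meeting all $\conv(K_i)$ in a degenerate way.

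\textbf{Existence.} Let $T$ be the set of oriented hyperplanes meeting every $K_i$, with the orientation convention fixed as in the paper. Consider the map $\Phi : T \to [0,1]^d$ sending $h$ to the vector of volume proportions of the $K_i$ below $h$. $\Phi$ is continuous because the bodies are convex with nonempty interior. By the uniqueness argument above, $\Phi$ is injective.

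Next I would show that $T$ is homeomorphic to a closed $d$-ball. Using well-separation, each $h\in T$ is determined by a choice of one boundary-or-interior "level" in each $K_i$, which gives a parametrisation by $[0,1]^d$. I also want $\Phi$ to map $\partial T$ onto $\partial[0,1]^d$ with degree one. On the face $\{\beta_i = 0\}$ the hyperplane supports $K_i$ from above, which forces proportion $0$; the face $\{\beta_i=1\}$ behaves symmetrically. This gives the sign conditions of the Poincar\'e--Miranda theorem for $\Phi-\alpha$, and hence a zero. Alternatively, invariance of domain plus the boundary condition gives that $\Phi$ is onto $[0,1]^d$. Either way, the hyperplane $h$ with $\Phi(h)=\alpha$ exists and meets every $K_i$, which completes the plan.
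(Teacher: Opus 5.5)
The paper does not prove this statement; it quotes it from B\'ar\'any, Hubard, and Jer\'onimo, whose proof goes through Brouwer's fixed point theorem. Judged on its own, your proposal has a genuine gap in the uniqueness half, and the existence half depends on it.

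\textbf{Uniqueness.} You never use how $h$ and $h'$ are oriented. Yet the orientation, given by the simplex $(p_1,\dots,p_d)$ with $p_i\in K_i\cap h$, is exactly what rules out equal proportions. Even your parallel case needs it: you must know that $h$ and $h'$ have the same positive side, which follows from continuity of the orientation along the translation.

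In the non-parallel case, equal proportions for $K_i$ only mean that $K_i$ has equal mass in the two wedges where $h$ and $h'$ disagree. This does not force $K_i$ to meet $L$. Here is a counterexample in $\R^2$. Let $h=\{y=x/10\}$ and $h'=\{y=-x/10\}$, and let $K_2$ be a thin rectangle around $\{3\}\times[-0.3,0.3]$. Then both lines are transversal and oriented left to right, and each choice of $K_1$ below forms a well-separated pair with $K_2$.
\begin{itemize}
\item $K_1=[-1.1,1.1]\times[0.09,0.11]$ has the same positive area below $h$ as below $h'$ (by the symmetry $x\mapsto -x$), but it stays away from $L=\{(0,0)\}$.
\item $K_1=\conv\{(-1,0.1),(1,0.1),(0,1)\}$ has proportion $0$ for both lines and also misses $L$.
\end{itemize}
So both of your cases fail, and the conclusion that every $K_i$ reaches across $L$ is false. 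The closing step is vacuous anyway: $I=\emptyset$ imposes nothing, and any $d$ points of $\R^d$ have affine hull of dimension at most $d-1$.

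The usable consequence of well-separation is that no $(d-2)$-flat meets all $K_i$ (by Radon). But a $K_i$ missing $L$ need not distinguish $h$ from $h'$, as the rectangle shows. This is exactly the pitfall the appendix discusses for Steiger and Zhao.

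The missing idea is how to choose $i$ using the orientation:
\begin{itemize}
\item Of the two rotations from $h$ to $h'$ in the pencil through $L$, exactly one sweeps the double wedge where $h$ and $h'$ disagree. It is the one along which the continuously transported orientation of $h$ agrees with that of $h'$.
\item The simplex orientation is well defined and continuous on transversal hyperplanes. Hence the other rotation must leave the transversal set. The hyperplanes through $L$ meeting a $K_i$ that misses $L$ form an arc containing $h$ and $h'$. So some $K_i$ with $K_i\cap L=\emptyset$ is met by every hyperplane of the sweeping rotation.
\item On the side of $L$ containing that $K_i$, the wedge between $h$ and $h'$ is a disagreement wedge with positive $K_i$-mass, while its opposite wedge has none. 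So the proportions differ.
\end{itemize}

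\textbf{Existence.} The Poincar\'e--Miranda route needs a homeomorphism from the space $T$ of transversal oriented hyperplanes to a cube. It must send the hyperplanes supporting $K_i$ from either side to the two opposite facets in direction $i$. You only assert this. Parametrizing by ``a level in each $K_i$'' is circular if the level is the volume proportion, since then the parametrization is $\Phi$ itself and its bijectivity is the theorem.

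Your invariance-of-domain alternative does work without any cube structure. Let $U\subseteq T$ be the nonempty open set of hyperplanes meeting all interiors. Then $\Phi(U)\subseteq(0,1)^d$ and $\Phi(T\setminus U)\subseteq\partial[0,1]^d$. So $\Phi(U)=\Phi(T)\cap(0,1)^d$ is relatively closed by compactness of $T$ and open by invariance of domain, hence equals $(0,1)^d$. The boundary values follow by compactness. But this requires $\Phi$ to be injective, which is precisely the uniqueness you have not established. It also uses continuity of the orientation on $T$, which you should prove; it follows from the affine independence of transversal $d$-tuples.
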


\begin{proof}[Proof of \cref{lem:OrientationIsUSO}]
We first show that every subgrid has a sink. 
By the definition of $\sigma_{\mathcal{P}}$, a subgrid is spanned by subsets $P'_i\subseteq P_i$ and a sink corresponds to a $(1,\dots,1)$-cut. 

As any subset of a well-separated point set in weak general position is again well-separated and in weak general position, the existence of such a $(1,\dots,1)$-cut is guaranteed by applying \cref{thm:alpha_mass} with the convex sets being the convex hulls of $P'_1, \dots, P'_d$, and $\alpha_i = 0$ for $i \in [d]$.

We now argue that every subcube is a USO. 
A subcube is spanned by at most two points per $P_i$. 
Let $Q_i$ be the set of these at most two points.
As argued before, $(Q_1, \dots, Q_d)$ is well-separated and in weak general position.
Further, the convex hull of the at most two points per $P_i$ is a line segment.
Hence, by applying \cref{thm:alpha_mass} on these convex hulls, we obtain the existence of all possible $(\alpha_1, \dots, \alpha_d)$-cuts of $(Q_1, \dots, Q_d)$ for all $(\alpha_1, \dots, \alpha_d) \in [|Q_1|] \times \dots \times [|Q_d|]$.
Hence, in the subcube, each possible \emph{outmap} (i.e., a binary vector at each vertex that encodes whether the incident edge along each dimension is incoming or outgoing) is present.
This implies that the subcube is indeed a USO~\cite{szabo2001usos}.

The claim now follows from Lemma \ref{lem:GridUSO}.
\end{proof}

The $\alpha$-Ham-Sandwich theorem for point sets (\cref{thm:alphaHS}) now follows easily.

\alphaHS*

\begin{proof}
By Lemma \ref{lem:OrientationIsUSO}, the orientation $\sigma_{\mathcal{P}}$ is a grid USO. 
We define a function $r : [|P_1|] \times \dots \times [|P_d|] \to \{0, \dots, |P_1|-1\} \times \dots \times \{0, \dots, |P_d|-1\}$ that assigns to each vertex of the grid graph $\Grid_{\mathcal{P}}$ a tuple $(a_1, \dots, a_d)$ such that $a_i$ is the number of the vertex's outgoing edges in the dimension $i$.
Then \cite[Theorem 2.14]{gaertner2008grids} states that $r$ is a bijection.
This implies that for every $(\alpha_1,\ldots,\alpha_d) \in [P_1] \times \dots \times [P_d]$, there is a unique $(\alpha_1,\ldots,\alpha_d)$-cut, as required.
\end{proof}

\section{Second Proof: Rainbow Arrangements and the Poincar\'{e}-Miranda Theorem}
\label{sec:rainbow}

We provide definitions and preliminaries on the Poincar\'{e}-Miranda Theorem in Sections~\ref{sec:rainbow_definition} and~\ref{sec:poincare-miranda} and present the proof of \Cref{thm:rainbowHS} in Section~\ref{sec:rainbow_proof}.

\subsection{Point-Hyperplane Duality and Rainbow Arrangements}
\label{sec:rainbow_definition}

In this subsection, we argue why our $\alpha$-Ham-Sandwich theorem for rainbow arrangements (\cref{thm:rainbowHS}) is a generalization of the one for point sets (\cref{thm:alphaHS}).
We first consider the point-hyperplane dual of \cref{thm:alphaHS}.
The dual of every input point is a hyperplane, and the well-separation of the point set translates to the property that for every subset $S$ of $[d]$, there exists a point above all the dual hyperplanes of the points in $\cup_{i \in S} P_i$ and below those of the points in $\cup_{i \notin S} P_i$.
The dual of an $(\alpha_1, \dots, \alpha_d)$-cut is then a point lying on one dual hyperplane and above exactly $\alpha_i-1$ hyperplanes of each color $i$.
\cref{thm:alphaHS} is equivalent to the statement that such a point is unique for every $(\alpha_1, \dots, \alpha_d)$.

With this interpretation in mind, we now define our generalized arrangements, where we define an \emph{oriented pseudo-hyperplane} as a subset of $\R^d$ that is homeomorphic to $\R^{d - 1}$ and divides $\R^d$ into two parts: a positive and a negative side. We say a point $p$ lies \emph{above} (\emph{below}) an oriented pseudo-hyperplane $H$ if and only if $p$ is contained in the positive (negative) side.

\begin{definition}[Generalized Arrangement, Rainbow Arrangement]
\label{def:generalized_arrangments}
A family $\mathcal{F}$ of oriented pseudo-hyperplanes in $\mathbb{R}^d$ is called a \emph{generalized arrangement} if for all $k \in [d + 1]$, the intersection of any $k$ of them is a (not necessarily connected) $(d-k)$-dimensional manifold\footnote{For the sake of convenience, we consider the empty set to be a manifold of any dimension. In particular, it is the only manifold of dimension $-1$.} and $\mathbb{R}^d\setminus\bigcup\mathcal{F}$ has finitely many connected components (which we call \emph{cells}).

A generalized arrangement is \emph{colored} if $\mathcal{F}$ is partitioned into $d$ pairwise disjoint subfamilies $H_1,\ldots,H_d$. We think of this as coloring each oriented pseudo-hyperplane with one of $d$ colors and call the $H_i$'s \emph{color classes}.

A colored generalized arrangement is a \emph{rainbow arrangement} if for each choice $h_1\in \nolinebreak H_1,\ldots,$ $h_d\in H_d$ of one oriented pseudo-hyperplane per color class, we have that the intersection $h_1\cap\ldots\cap h_d$ is a single point.
\end{definition}

Note that our arrangements differ significantly from the well-known concept of \emph{pseudo-hyperplane arrangements}, where the intersection of any $k$ pseudo-hyperplanes is required to be homeomorphic to $\R^{d - k}$. In particular, while every pseudo-hyperplane arrangement is also a generalized arrangement, the converse is not true. 

The following definition of well-separation is a dual version of the notion of well-separation for point sets.

\begin{definition}[Well-Separated]
A colored generalized arrangement $\mathcal{F}=(H_1, \dots, H_d)$ in $\mathbb{R}^d$ is \emph{well-separated} if for every sign vector $s \in \{+, -\}^d$, there exists a point $x_s \in \mathbb{R}^d$ such that 
\begin{itemize}
    \item $x_s$ lies in an unbounded cell,
    \item $x_s$ lies above all oriented pseudo-hyperplanes in $H_i$ iff $s_i = +$,
    \item and $x_s$ lies below all oriented pseudo-hyperplanes in $H_i$ iff $s_i = -$,
\end{itemize}
for all $i \in [d]$.
\end{definition}

\subsection{The Poincar\'{e}-Miranda Theorem}
\label{sec:poincare-miranda}

In our proof, we will use the Poincar\'{e}-Miranda theorem, which we briefly recall.

\begin{theorem}[Poincar\'{e}-Miranda theorem]
Consider $d$ continuous functions
\[f_1,\ldots,f_d: [-1,1]^d\rightarrow \mathbb{R}.\]
Assume that for each variable $x_i$, the function $f_i$ is non-positive whenever $x_i=-1$ and non-negative whenever $x_i=1$.
Then there exists a point $x^*$ in $[-1,1]^d$ with $f_i(x^*)=0$ for all $i\in\{1,\ldots,d\}$.
\end{theorem}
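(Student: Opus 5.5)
We would derive the Poincar\'{e}-Miranda theorem from Brouwer's fixed point theorem, which guarantees that every continuous self-map of $[-1,1]^d$ has a fixed point. The plan is to encode the condition $f_i(x)=0$ as a fixed-point condition. Let $\pi:\R\to[-1,1]$ be the truncation $\pi(t)=\max(-1,\min(1,t))$. We then define $g:[-1,1]^d\to[-1,1]^d$ coordinatewise by
\[
g_i(x)=\pi\bigl(x_i-f_i(x)\bigr), \qquad i\in[d].
\]
Each $g_i$ is continuous, being a composition of continuous maps. By construction $g$ maps the cube into itself. Brouwer therefore yields a point $x^*$ with $g(x^*)=x^*$.

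It remains to check that $f_i(x^*)=0$ for every $i$; we argue one coordinate at a time. Suppose first that $f_i(x^*)>0$. Then $x^*_i-f_i(x^*)<x^*_i$. Since $\pi$ is the identity on $[-1,1]$, the equality $\pi(x^*_i-f_i(x^*))=x^*_i$ is only possible if the truncation is active at the lower end. That is, $x^*_i-f_i(x^*)<-1$ and $x^*_i=-1$. But on the face $x_i=-1$ the hypothesis gives $f_i(x^*)\le 0$, a contradiction. Symmetrically, if $f_i(x^*)<0$, then $x^*_i-f_i(x^*)>x^*_i$, which forces $x^*_i=1$. On that face the hypothesis gives $f_i(x^*)\ge 0$, again a contradiction. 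Hence $f_i(x^*)=0$ for all $i$.

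We do not expect a real obstacle here. The only delicate point is the case analysis for the truncation: one must note that a fixed point of $g_i$ with $f_i(x^*)\neq 0$ can occur only on the boundary face where the truncation is active. That is exactly the face on which the sign hypothesis on $f_i$ applies, and this is why the signs in the hypothesis must match the direction of the shift $x_i-f_i(x)$. With the opposite sign convention one would use $x_i+f_i(x)$ instead.
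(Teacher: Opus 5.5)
Your proof is correct: $x \mapsto \pi(x - f(x))$ is a continuous self-map of the cube. Your case analysis also correctly shows that a fixed point with $f_i(x^*) \neq 0$ could only lie on the face where the truncation is active, and the sign hypothesis excludes this there. The paper does not prove this theorem but only cites it, noting (via Miranda) that it is equivalent to Brouwer's fixed point theorem, and your argument is the standard Brouwer-to-Poincar\'{e}--Miranda direction of that equivalence.
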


The theorem was first stated by Poincar\'{e}~\cite{poincare1883} without proof. It was later shown by Miranda~\cite{miranda1940} that it is equivalent to Brouwer’s fixed point theorem.
Since then, the theorem has found numerous applications and generalizations; see, e.g., \cite{ariza2019poincare} and the references therein.

\subsection{The Second Proof}
\label{sec:rainbow_proof}

We define the \emph{$k$-level} of a family~$H$ of oriented pseudo-hyperplanes to be the set of points that lie (i) on at least one oriented pseudo-hyperplane of $H$ and (ii) on or above exactly $k$ oriented pseudo-hyperplanes of $H$. 
We are now ready to prove our generalization of the $\alpha$-Ham-Sandwich theorem.

\rainbowHS*

We prove the existence of a point $x_{\alpha}$ by showing that the corresponding $\alpha_i$-levels intersect, see Figure \ref{fig:rainbow_cut} for an illustration. The uniqueness for rainbow arrangement additionally uses a counting argument.

\begin{figure}[ht]
    \centering
    \includegraphics[width=0.7\linewidth]{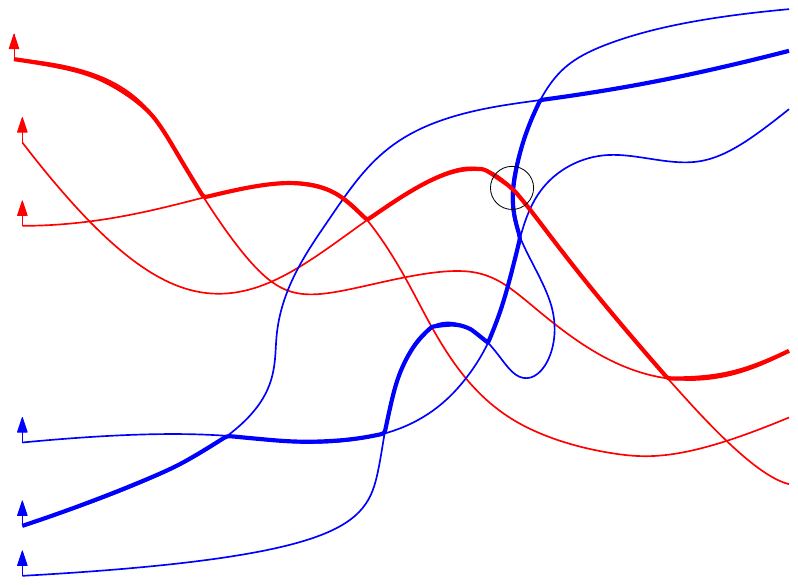}
    \caption{A rainbow arrangement with colors red and blue. The red 3-level and the blue 2-level are highlighted and intersect in a unique point.}
    \label{fig:rainbow_cut}
\end{figure}

\begin{proof}
By assumption $\mathcal{F}$ is well-separated, so for each vector $s \in \{+, -\}^d$ there is a point $x_s$ in an unbounded cell that lies above or below all oriented pseudo-hyperplanes of a color class. Consider a subset $I\subset [d]$ and a family of vectors $s^1,\ldots,s^k$ in $\{+, -\}^d$ for which $s_i^1=\ldots =s_i^k$ for all $i\in I$. Let $\mathcal{F_{I}}=\cup_{i\in I} H_i$ be the generalized arrangement consisting only of oriented pseudo-hyperplanes of color classes in $I$. We claim that the points $x_{s^1},\cdots,x_{s^k}$ lie in a common unbounded cell of $\mathcal{F_{I}}$. Indeed, assume for the sake of contradiction that two of the points $x_{s^i}$ and $x_{s^j}$ lie in different unbounded cells. Then there is an oriented pseudo-hyperplane of $\mathcal{F_{I}}$ separating them. However, by definition, $x_{s^i}$ and $x_{s^j}$ must lie on the same side of any oriented pseudo-hyperplane in~$\mathcal{F_{I}}$, a contradiction.

Now consider the cube $[-1,1]^d$.
Note that each face of the cube can be encoded by a vector $z$ in $\{*,-1,1\}^d$, such that the face contains all vertices $v$ of the cube such that $v_i = z_i$ for all $i$ with $z_i \neq *$.
We call such a face \emph{characterized} by the set $\{i \mid z_i \neq *\}$. 

Next, consider the following embedding of the cube into $\R^d$: Each vertex $v$ in $\{-1,1\}^d$ is mapped to the point $x_s$ with $s \in \{+,-\}^d$ such that $v_i = -1$ if and only if $s_i = -$ for $i \in [d]$.
Then as we already argued above, for every face of the cube characterized by a set $I \subseteq [d]$, all its vertices are in a common unbounded cell of $\mathcal{F}_I$, so we can extend the embedding defined on the facets of a face to the entire face.

After applying a homeomorphism on $\mathbb{R}^d$ that maps this embedding to the cube $[-1,1]^d\subset\mathbb{R}^d$ we can thus assume that all oriented pseudo-hyperplanes of the color class $H_i$ have the facet $x_i=-1$ on their negative side, and the facet $x_i=1$ on their positive side.

Consider now the $\alpha_i$-level $L$ of some color class $H_i$. We define a function $f_i$ for which $f_i(x)=0$ if $x$ is on $L$, $f_i(x)>0$ if $x$ is above $L$ and $f_i(x)<0$ if $x$ is below $L$. To achieve this, set $f_i(x) = 0$ for all points $x$ that lie on $L$. For all other points $x$, let $d(x, L)$ denote the distance of $x$ to $L$. If $x$ is below $L$, we set $f_i(x) = -d(x, L)$, and if $x$ is above $L$, we set $f_i(x) = d(x, L)$. Since the distance is a continuous function, $f_i$ is continuous. 

By construction, the functions $f_i$ satisfy the conditions of the Poincar\'{e}-Miranda theorem.
We thus get that there is a point $x^*$ in $[-1,1]^d$ for which $f_i(x^*)=0$ for all $i\in\{1,\ldots,d\}$.
Recall that by the definition of a generalized arrangement, no point can lie on $d + 1$ of our pseudo-hyperplanes. Thus, $x^*$ is a point lying on exactly one and above exactly $\alpha_i - 1$ oriented pseudo-hyperplanes of color $i$ for all $i \in [d]$, as required.

Note that $x^*$ lies on one oriented pseudo-hyperplane of each color class. If the arrangement is a rainbow arrangement, then there are only $n_1\cdot\ldots\cdot n_d$ many possible locations for $x^*$. As each of these points can only be the solution for one $\alpha$-vector and there are $n_1\cdot\ldots\cdot n_d$ different $\alpha$-vectors, it follows that for each $\alpha$-vector the solution must be unique.
\end{proof}

By the famous topological representation theorem of Folkman and Lawrence~\cite{folkmanOrientedMatroids1978}, \Cref{thm:rainbowHS} also has implications for oriented matroids. To explain this, we will assume familiarity with oriented matroids as presented for example in the Handbook of Discrete and Computational Geometry~\cite{richter-gebertOrientedMatroids1997}. In particular, we consider uniform oriented matroids of rank $d + 1$. We denote such an oriented matroid by $M = (E, \mathcal{L})$, where $E$ is the ground set and $\mathcal{L} \subseteq \{-, 0, +\}^E$. Coloring $E$ corresponds to a partition into classes (colors) as $E = E_1 \sqcup \dots \sqcup E_d$. If we use $c(e)$ to denote the color of element $e \in E$, then we take well-separation to mean that for all $s \in \{- , + \}^d$, there exists a covector $v \in \mathcal{L}$ with $v_e = s_{c(e)}$ for all $e \in E$. By applying the Folkman-Lawrence representation theorem and then \Cref{thm:rainbowHS}, we thus get the following corollary in this setting. 

\begin{corollary}[$\alpha$-Ham-Sandwich for Oriented Matroids]\label{cor:matroids}
    Let $M = (E, \mathcal{L})$ be a uniform oriented matroid of rank $d + 1$ in covector representation with $\mathcal{L} \subseteq \{-, 0, +\}^E$. Assume that $M$ is well-separated, and in particular, $E$ is partitioned into $d$ classes $E = E_1 \sqcup \dots \sqcup E_d$ (colors). Then for every $\alpha \in [|E_1|] \times \dots \times [|E_d|]$, there is a covector $v^{\alpha} \in \mathcal{L}$ with exactly one zero and exactly $(\alpha_i - 1)$ minuses on the subset $E_i$, for all $i \in [d]$. 
\end{corollary}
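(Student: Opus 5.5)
The plan is to reduce Corollary~\ref{cor:matroids} to \Cref{thm:rainbowHS} via the Folkman--Lawrence topological representation theorem. Since $M$ is a uniform oriented matroid of rank $d+1$, the representation theorem gives an arrangement of oriented pseudo-spheres in $S^d$, one for each $e\in E$. The covectors of $M$ are exactly the sign vectors of the cells of this arrangement. We first need the arrangement to be affine. If $M$ is acyclic in the appropriate sense, we can add or choose a pseudo-sphere at infinity. Otherwise we reorient so that some tope, say the one given by well-separation for $s=(+,\dots,+)$, is positive. We then pass to an affine picture: choose a generic extra pseudo-sphere $g$ (an extension of $M$, e.g.\ an extension in general position) and remove it. This yields an arrangement of oriented pseudo-hyperplanes in $\R^d\cong S^d\setminus \text{(closed hemisphere side of }g)$, more precisely the open side. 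Each pseudo-hyperplane is homeomorphic to $\R^{d-1}$. Since the arrangement is a pseudo-hyperplane arrangement in which $k$-fold intersections are homeomorphic to $\R^{d-k}$ or empty, it is a generalized arrangement with finitely many cells. Coloring by $E_1,\dots,E_d$ makes it a colored generalized arrangement.

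The main obstacle is well-separation: we need each $x_s$ to lie in an \emph{unbounded} cell of the affine part. The assumption provides, for every $s\in\{-,+\}^d$, a tope $v$ with $v_e=s_{c(e)}$, i.e.\ a cell of the spherical arrangement. Choosing $g$ appropriately, for instance as a generic extension passing through a tope-adjacent region, does not obviously make all $2^d$ topes unbounded. So I would instead argue directly as follows. A tope $T$ and its antipode $-T$ are both cells. In the affine picture obtained by removing one closed side of a generic pseudo-sphere $g$, a tope is cut by $g$ exactly when it meets $g$. Hence if both $T$ and $-T$ lie entirely on the kept side, we get a contradiction with antipodality, since $g$ separates antipodal points. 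I would pick $g$ as a perturbation through the central intersection so that it meets every well-separation tope. Concretely, $g$ is a pseudo-sphere through one point in the interior of each of the $2^d$ topes, chosen to be ``near'' a great sphere, using the fact that the set of $2^d$ topes is closed under negation. Alternatively, a cleaner route is to note that the proof of \Cref{thm:rainbowHS} only uses unboundedness to embed the cube with faces in common cells of $\mathcal{F}_I$. On $S^d$ this embedding can be done directly, because the topes of $\mathcal{F}_I$ restricted to the colors in $I$ are connected regions, since contractions/deletions of oriented matroids are again represented. The Poincar\'e--Miranda argument then runs inside the embedded cube regardless. I would take this route if the choice of $g$ becomes technical.

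Applying \Cref{thm:rainbowHS} then gives a point $x_\alpha$ lying on exactly one pseudo-hyperplane of each color and above exactly $\alpha_i-1$ of color $i$. Here ``above'' must be matched with the sign convention, so that it counts minuses after an orientation flip. Since no point lies on $d+1$ pseudo-hyperplanes, $x_\alpha$ lies on exactly $d$ of them, one per color. The sign vector of the cell containing $x_\alpha$ is a covector $v^\alpha\in\mathcal{L}$. By uniformity, its zeros are exactly these $d$ elements, so $v^\alpha$ has exactly one zero and exactly $\alpha_i-1$ minuses on each $E_i$, as claimed.
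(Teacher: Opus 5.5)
There is a genuine gap, and it sits in the step you yourself flag. Your outline is the paper's (Folkman--Lawrence, then \Cref{thm:rainbowHS}). You correctly isolate what the paper's one-line derivation leaves implicit. Matroid well-separation only gives you topes $T_s$. \Cref{thm:rainbowHS}, however, needs each $x_s$ in an \emph{unbounded} cell, i.e.\ the pseudo-sphere $g$ you send to infinity must cut every $T_s$.

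Your primary way of securing this is not an argument:
\begin{itemize}
\item Since $M$ is uniform of rank $d+1$ with $|E|\ge d+1$, the pseudo-spheres have empty common intersection, so there is no ``central intersection'' to perturb through.
\item ``Near a great sphere'' has no meaning for a non-realizable arrangement.
\item Closure of $\{T_s\}$ under negation is only a necessary condition.
\end{itemize}
What you would actually need is an extension theorem producing a pseudo-sphere through prescribed cells. For $d=2$ Levi's enlargement lemma could supply this (one point in each of the two projective cells). That lemma is a rank-$3$ phenomenon, though, and no comparable extension result is available for pseudo-spheres in higher rank, let alone for $2^{d-1}$ prescribed antipodal pairs of cells. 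So for $d\ge 3$ the hypothesis of \Cref{thm:rainbowHS} is not verified along this route.

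Your fallback is the right repair, and it is cleaner than passing to an affine chart at all, so make it the main argument. Work on $S^d$. Send the vertex of $[-1,1]^d$ indexed by $s$ to a point of $T_s$. Map each face characterized by $I\neq\emptyset$ into the tope $C_I^s$ of the deletion $M\setminus(E\setminus\bigcup_{i\in I}E_i)$ with sign $s_i$ on $E_i$. These regions are nested correctly, so the map can be built face by face.

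Here ``connected'' is not enough. Extending over a face of dimension $k\ge 2$ requires $\pi_{k-1}(C_I^s)=0$, so you must use that topes of a pseudo-sphere arrangement are open balls, hence contractible. For the whole cube, use $\pi_{d-1}(S^d)=0$. A continuous map suffices; no embedding or ambient homeomorphism is needed.

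Next, let $A_i$ (resp.\ $B_i$) be the open set of points off the color-$i$ pseudo-spheres with at least (resp.\ fewer than) $\alpha_i$ positive entries on $E_i$. Compose $d(\cdot,B_i)-d(\cdot,A_i)$ with your map and apply Poincar\'e--Miranda; the facet conditions hold because the facets map into the all-negative and all-positive regions of $E_i$. A common zero lies in $\overline{A_i}\cap\overline{B_i}$ for every $i$, hence on at least one pseudo-sphere of each color. Uniformity then leaves exactly one zero per color, which forces exactly $\alpha_i-1$ plus signs on $E_i$. Negating, using $\mathcal{L}=-\mathcal{L}$, gives the minuses. With these points made explicit, the corollary follows without any unbounded-cell condition.
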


\section{Generalizing Well-Separation}\label{sec:generalized_separation}
Using the Poincar\'{e}-Miranda theorem, we can prove an even more general statement by relaxing the definition of well-separation. (Recall the definition of a $k$-level of a set of oriented pseudo-hyperplanes, as defined in \cref{sec:rainbow_proof}.) 

\begin{definition}[$(\beta,\gamma)$-separated]
\label{def:generalized_separation}
Let $\mathcal{F}=(H_1, \dots, H_d)$ be a colored generalized arrangement in $\mathbb{R}^d$, where each color class $H_i$ has size $n_i$. Let $\beta=(\beta_1,\ldots,\beta_d)$ and $\gamma=(\gamma_1,\ldots,\gamma_d)$ be vectors such that $\beta_i, \gamma_i \in [n_i]$ and $\beta_i\leq \gamma_i$ for all $i \in [d]$.
The colored generalized arrangement $\mathcal{F}$ is called \emph{$(\beta,\gamma)$-separated} iff for every sign vector $s \in \{+, -\}^d$, there exists a point $x_s \in \mathbb{R}^d$ such that 
\begin{itemize}
	\item $x_s$ lies in an unbounded cell,
    \item $x_s$ lies above the $\gamma_i$-level of $H_i$ iff $s_i = +$,
    \item and $x_s$ lies below the $\beta_i$-level of $H_i$ iff $s_i = -$,
\end{itemize}
for all $i \in [d]$.
\end{definition}

Note that by setting $\beta=(1,\ldots, 1)$ and $\gamma=(n_1,\ldots,n_d)$ we recover the definition of well-separated.

\begin{theorem}
\label{thm:beta-gamma}
Let $\mathcal{F}=(H_1, \ldots,H_d)$ be a $(\beta,\gamma)$-separated colored generalized arrangement where each color class $H_i$ has size $n_i$. Then for every $(\alpha_1, \dots, \alpha_d) \in \{\beta_1,\ldots,\gamma_1\} \times \dots \times \{\beta_d,\ldots,\gamma_d\}$, there is a point $x_{\alpha} \in \mathbb{R}^d$ lying on one and above exactly $\alpha_i - 1$ oriented pseudo-hyperplanes of color $i$ for all $i \in [d]$.
\end{theorem}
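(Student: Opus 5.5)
We follow the second proof of \Cref{thm:rainbowHS} almost verbatim, changing only the properties of the corner points that we use. First we fix the points $x_s$, $s\in\{+,-\}^d$, given by $(\beta,\gamma)$-separation. Let $I\subseteq[d]$ and take sign vectors $s^1,\dots,s^k$ that agree on $I$. We claim that $x_{s^1},\dots,x_{s^k}$ lie in a common connected component of the complement of $\bigcup_{i\in I}(L^i_{\beta_i}\cup L^i_{\gamma_i})$, where $L^i_k$ is the $k$-level of $H_i$. To get this we will choose a path inside the unbounded region, as in the earlier argument. Where that argument used ``lying on the same side of every pseudo-hyperplane of $\mathcal{F}_I$'', we instead need ``lying on the same side of the relevant levels''. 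The same reasoning lets us extend the map $\{-1,1\}^d\to\R^d$, $v\mapsto x_s$, face by face to an embedding of the whole cube $[-1,1]^d$.

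The required property of the embedding is this: on the facet characterized by $\{i\}$ with $v_i=+1$, every point lies strictly above $L^i_{\gamma_i}$, and on the facet with $v_i=-1$, every point lies strictly below $L^i_{\beta_i}$. After applying a homeomorphism of $\R^d$, we may assume the embedded cube is $[-1,1]^d$.

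Next, fix $\alpha_i\in\{\beta_i,\dots,\gamma_i\}$ and define $f_i$ as the signed distance to the $\alpha_i$-level $L$ of $H_i$, exactly as before. The levels are nested: a point strictly below $L^i_{\beta_i}$, meaning one lying above fewer than $\beta_i-1$ pseudo-hyperplanes in the appropriate counting, lies above fewer than $\alpha_i-1$ of them. So $f_i<0$ on the facet $x_i=-1$, and symmetrically $f_i>0$ on the facet $x_i=1$. The Poincar\'e--Miranda theorem then gives a point $x^*$ with $f_i(x^*)=0$ for all $i$. No point lies on $d+1$ pseudo-hyperplanes, so $x^*$ lies on exactly one pseudo-hyperplane of each color and above exactly $\alpha_i-1$ of color $i$.

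The main obstacle is the first step, building the embedding so that each facet stays entirely on the correct side of the corresponding $\beta_i$- or $\gamma_i$-level. Previously, sharing a side of every hyperplane forced two points into the same cell. Here two points above $L^i_{\gamma_i}$ need not be connected within that region, since the region above a level need not be connected in a generalized arrangement. I would handle this by working in the unbounded part of the arrangement. Choose a large sphere that meets only unbounded cells, and route the edges and higher faces of the embedded cube near infinity, using the cyclic structure of the unbounded cells. To extend over faces I would use induction on the face dimension together with contractibility of the relevant regions near infinity. If this fails in full generality, I would instead read the definition as implicitly providing connectedness, and use the fact that the region strictly above $L^i_{\gamma_i}$, intersected over $i\in I$ with the unbounded part, is connected.
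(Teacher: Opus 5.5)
Your plan follows the paper's proof: corner points $x_s$, a cube through them whose facet $x_i=\pm 1$ stays on the correct side of the color-$i$ levels, signed distances $f_i$, and Poincar\'e--Miranda. The paper uses all levels between $\beta_i$ and $\gamma_i$; you use only $L^i_{\beta_i}$ and $L^i_{\gamma_i}$ plus monotonicity in $\alpha_i$. That is a harmless simplification. (A point strictly below $L^i_{\beta_i}$ is above \emph{at most} $\beta_i-1$ pseudo-hyperplanes, not fewer than $\beta_i-1$, but the monotonicity is right.) The genuine gap is the step you flag yourself, and neither of your repairs closes it. The corners $x_s$ with $s_i=+$ need not be joinable inside the region above $L^i_{\gamma_i}$, not even near infinity.

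\emph{A concrete instance.} In $\mathbb{R}^2$ let $H_1=\{h,h'\}$ with $h=\{y=0\}$ (positive side $y>0$) and $h'=\{y=x^2-1\}$ (positive side $y<x^2-1$). Let $H_2=\{g\}$ with $g=\{x=0\}$ (positive side $x>0$), and take $\beta=\gamma=(2,1)$. This is a colored generalized arrangement. The points $x_{++}=(2,1)$, $x_{+-}=(-2,1)$, $x_{-+}=(5,100)$, $x_{--}=(-5,100)$ lie in unbounded cells and witness $(\beta,\gamma)$-separation. The region above the $2$-level of $H_1$ is $\{0<y<x^2-1\}$. Its two components, $x>1$ and $x<-1$, contain $x_{++}$ and $x_{+-}$ respectively, so the facet $x_1=1$ cannot be drawn. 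On a large circle this region appears as two disjoint arcs, so routing near infinity does not help.

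Worse, the conclusion itself fails for $\alpha=(2,1)$. The point $x_\alpha$ must lie on $g$, and on $g$ being on or above $h$ forces $y\ge 0$, while being on or above $h'$ forces $y\le -1$. So no argument closes this step without an additional hypothesis of the kind in your fallback.

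Even granting connectedness, $d\ge 3$ needs more. Poincar\'e--Miranda only requires a continuous map $\varphi$ of the cube with the facet conditions (apply it to $f_i\circ\varphi$; no embedding or ambient homeomorphism is needed). But extending $\varphi$ over a $k$-face requires the image of its boundary sphere to be null-homotopic in the corresponding region.

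For comparison, the paper handles this step with a ball $B$ containing all bounded cells. It then asserts that two corners in different unbounded cells of $\mathcal{F}_I$ would be separated by some relevant level, contradicting that they lie on the same side of every relevant level. The example shows that the inference ``same side of every relevant level $\Rightarrow$ same cell'' is exactly what fails. So the paper's argument has the same hole you identified rather than a way around it.
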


\begin{proof}
By assumption $\mathcal{F}$ is $(\beta,\gamma)$-separated, so for each vector $s \in \{+, -\}^d$ there is a point $x_s$ in an unbounded cell that lies above or below all relevant levels of a color class as stated in \cref{def:generalized_separation}. As the arrangement has finitely many cells, we can place a large enough ball $B$ that contains all the bounded cells. Observe that we can assume $x_s$ to lie outside of~$B$. Consider a subset $I\subseteq [d]$ and a family of vectors $s^1,\ldots,s^k$ for which $s_i^1=\ldots =s_i^k$ for all $i\in I$. Let $\mathcal{F_{I}}$ be the union of all relevant levels, that is, the $\alpha_i$-levels for $\alpha_i \in \{\beta_i, \ldots, \gamma_i\}$, of the color classes $i \in I$. We claim that the points $x_{s^1},\ldots,x_{s^k}$ lie in a common unbounded cell of $\mathcal{F_{I}}$. Indeed, assume for the sake of contradiction that two of the points $x_{s^i}$ and $x_{s^j}$ lie in different unbounded cells. Then there is a level of $\mathcal{F_{I}}$ separating them outside of $B$. However, by definition, $x_{s^i}$ and $x_{s^j}$ must lie on the same side of any relevant level in $\mathcal{F_{I}}$.

We thus indeed have that all the points $x_{s^1},\ldots,x_{s^k}$ lie in a common unbounded cell of $\mathcal{F_{I}}$ outside of $B$. 
Now we use the same argument as in the proof of \cref{thm:rainbowHS}.
In particular, we can again find an embedding of the cube $[-1,1]^d$ where each face $F_I$ characterized by a subset $I \subseteq [d]$ 
lies in this common unbounded cell of $\mathcal{F_{I}}$. After applying a homeomorphism on $\mathbb{R}^d$ that maps this embedding to the cube $[-1,1]^d\subset\mathbb{R}^d$ we can thus assume that all relevant levels of the color class $\mathcal{F}_i$ have the facet $x_i=-1$ on their negative side, and the facet $x_i=1$ on their positive side.

We now take the same functions $f_i$ as in the proof of Theorem \ref{thm:rainbowHS}, which satisfy the conditions of the Poincar\'{e}-Miranda theorem.
We thus get that there is a point $x^*$ in $[-1,1]^d$ for which $f_i(x^*)=0$ for all $i\in\{1,\ldots,d\}$.
\end{proof}

We think that it is instructive to translate \Cref{thm:beta-gamma} back to the original setting of colored point sets in $\R^d$ (i.e., the primal setting). Note that in \Cref{def:generalized_separation}, the condition that $x_s$ must lie in an unbounded face has technical reasons: In the proof of \Cref{thm:beta-gamma}, it allows us to easily embed the cube that we need for the Poincar\'{e}-Miranda theorem. However, we can get away without this assumption in the case of (straight) hyperplanes: In that case, the $k$-levels are guaranteed to be connected and piecewise linear, and thus we can embed the cube without assuming that the points $x_s$ lie in unbounded faces. Therefore, when going to the primal setting, we can omit this technical assumption and define $(\beta, \gamma)$-separation for point sets as follows.

\begin{definition}
    Let $\mathcal{P} = (P_1, \dots, P_d ) \subset \R^d$ be in weak general position and let $\beta, \gamma \in [|P_1|] \times \dots \times [|P_d|]$ with $\beta_i \leq \gamma_i$ for all $i \in [d]$ be arbitrary. We say that $\mathcal{P}$ is $(\beta, \gamma)$-separated if for every sign vector $s \in \{+, -\}^d$, there exists a hyperplane $h_s$ such that 
    \begin{itemize}
        \item $h_s$ lies strictly above at least $\gamma_i$ points of $P_i$ iff $s_i = +$, 
        \item and $h_s$ lies above at most $\beta_i - 1$ points of $P_i$ iff $s_i = -$, 
    \end{itemize}
    for all $i \in [d]$.
\end{definition}

By dualizing the point set and applying \Cref{thm:beta-gamma} for the dual hyperplanes, we thus get the following corollary for $(\beta, \gamma)$-separated point sets.

\begin{corollary}
    Let $\mathcal{P} = (P_1, \dots, P_d ) \subset \R^d$ be $(\beta, \gamma)$-separated for some $\beta, \gamma \in [|P_1|] \times \dots \times [|P_d|]$ and in general position\footnote{We require general position (no $d + 1$ points on a common hyperplane) to ensure that the dual arrangement is indeed a generalized arrangment. However, one could easily adapt our proofs to also make this work for weak general position.}. Then for every $(\alpha_1, \dots, \alpha_d) \in \{\beta_1,\ldots,\gamma_1\} \times \dots \times \{\beta_d,\ldots,\gamma_d\}$, there exists a unique $(\alpha_1, \dots, \alpha_d)$-cut.
\end{corollary}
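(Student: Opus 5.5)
The corollary has two parts, existence and uniqueness, and I would prove them separately.

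\emph{Existence.} I would dualize and then apply \Cref{thm:beta-gamma}. Map each point $p\in P_i$ to its dual hyperplane $p^*$ in $\R^d$, oriented so that ``$q$ above $h$'' corresponds to ``$h^*$ above $q^*$''. Give $p^*$ color $i$. Under general position, no $d+1$ dual hyperplanes pass through a common point, and any $k\le d$ of them meet in an affine $(d-k)$-flat or in the empty set. Hence the dual family is a colored generalized arrangement with finitely many cells. A hyperplane $h_s$ witnessing $(\beta,\gamma)$-separation dualizes to a point $h_s^*$. For $s_i=+$, this point lies above at least $\gamma_i$ dual hyperplanes of color $i$, i.e.\ above the $\gamma_i$-level. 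For $s_i=-$, it lies above at most $\beta_i-1$ of them, i.e.\ below the $\beta_i$-level.

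The remaining hypothesis of \Cref{thm:beta-gamma} is that $h_s^*$ lies in an unbounded cell. I expect this to be the main technical point. I would handle it in one of two ways.
\begin{itemize}
\item Perturb $h_s$ slightly and push it along a generic direction to infinity while preserving the counts. Each level is the graph of a piecewise linear function over a generic projection, so the region strictly between or beyond the relevant levels is a union of cells containing points arbitrarily far out.
\item Alternatively, rerun the proof of \Cref{thm:beta-gamma} directly. For straight hyperplanes each $k$-level is a connected, piecewise linear, $x_d$-monotone surface, so the points $x_s$ sharing the signs indexed by $I$ lie in a common connected region of the complement of the relevant levels. The cube can then be embedded face by face without the unboundedness assumption.
\end{itemize}
Either route yields, via Poincaré–Miranda, a dual point lying on exactly one color-$i$ hyperplane and above exactly $\alpha_i-1$ of them. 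Dualizing back, this point is a hyperplane through one point of each $P_i$ with exactly $\alpha_i-1$ points of $P_i$ below it, which is an $(\alpha_1,\dots,\alpha_d)$-cut.

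\emph{Uniqueness.} \Cref{thm:beta-gamma} gives no uniqueness, and in this setting the simple counting argument fails, since only the $\alpha$'s in the box $\prod_i\{\beta_i,\dots,\gamma_i\}$ are covered. I would instead use a degree or monotonicity argument. Suppose two distinct $\alpha$-cuts $h,h'$ exist, with point choices $p^i\in P_i$ and $q^i\in P_i$. Consider the subcube of the grid orientation $\sigma_{\mathcal P}$ spanned by $\{p^i,q^i\}$. Both cuts have the same outmap $\alpha-1$ in the counting sense of the proof of \Cref{thm:alphaHS}, and I would look for a contradiction there.

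For that contradiction I would try to show that, restricted to colorful hyperplanes meeting the region between the $\beta$- and $\gamma$-levels, $\sigma_{\mathcal P}$ behaves locally like a USO. Each subcube that lies inside this region would then have to be well-separated, by the argument of \Cref{lem:OrientationIsUSO} applied to pairs of points. This is the step I am least sure of. If it fails, I would fall back on a local-degree argument. The Poincaré–Miranda map $f=(f_1,\dots,f_d)$ has degree $1$ on the cube. For straight hyperplanes, each transversal zero of $f$ (a vertex of the level intersection) contributes $\pm1$. I would then show that every such contribution is $+1$ by an orientation computation using the determinant convention for ``above''. That forces exactly one zero, and hence exactly one cut.
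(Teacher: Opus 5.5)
\textbf{Gap: the uniqueness claim is false.} Your existence argument is the paper's proof: dualize, apply \Cref{thm:beta-gamma}, and drop the unbounded-cell hypothesis because levels of straight hyperplanes are connected and piecewise linear (your second bullet). That is all the paper offers. Since \Cref{thm:beta-gamma} asserts only existence, you are right that it does not give uniqueness. Your uniqueness part is where the gap lies, and it cannot be closed, because uniqueness fails under $(\beta,\gamma)$-separation.

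Take $d=2$, $v_k=(\cos\frac{k\pi}{3},\sin\frac{k\pi}{3})$ for $k=0,\dots,5$, $P_1=\{v_0,v_2,v_4\}$, $P_2=\{v_1,v_3,v_5\}$ (no three collinear), and $\beta=\gamma=(2,2)$. For small $\epsilon>0$, the lines $y=10$, $y=-10$, $y=\epsilon x$ and $y=-\epsilon x$ lie strictly above $(3,3)$, $(0,0)$, $(2,1)$ and $(1,2)$ points of $(P_1,P_2)$ respectively, so every sign vector is witnessed. Each long diagonal $v_0v_3$, $v_2v_5$, $v_4v_1$ passes through one point of each color and has exactly one point of each color strictly on each side. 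So, whatever orientation convention is used, all three are $(2,2)$-cuts.

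The example also shows where both of your routes break.
\begin{itemize}
\item \emph{Degree route.} Dually, the three cuts are the three crossings of the red and blue $2$-levels, which are graphs of piecewise-linear functions $g_1,g_2$. The local index of $f=(f_1,f_2)$ at a crossing is governed by the sign of $(g_2-g_1)'$ there, so consecutive crossings have opposite indices: $+1,-1,+1$. The Poincar\'e--Miranda degree only forces an odd signed count. $(\beta,\gamma)$-separation constrains $f$ on the boundary of the cube, not at its interior zeros, so no determinant computation can make every index $+1$.
\item \emph{USO route.} The outdegree vector $\alpha-1$ of a cut vertex counts all of $P_i$, not just the two points of the spanning subcube. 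Here the subcube $\{v_0,v_2\}\times\{v_3,v_5\}$, spanned by the cuts $v_0v_3$ and $v_2v_5$, has the four distinct outmaps $(1,0),(0,1),(1,1),(0,0)$. It is therefore a USO and yields no contradiction. Meanwhile $\sigma_{\mathcal P}$ as a whole is not a grid USO, since three vertices have outdegree vector $(1,1)$.
\end{itemize}

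In general only the existence half of the statement can be proved. Uniqueness does hold in the well-separated case $\beta=(1,\dots,1)$, $\gamma=(|P_1|,\dots,|P_d|)$, by \Cref{thm:alphaHS}.
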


\section{Rainbow Arrangements and Bicolored Stretchability}
\label{sec:er_hardness}

In this section, we take a closer look at well-separated rainbow arrangements in $\R^2$, where pseudo-hyperplanes are also called pseudo-lines. Concretely, such an arrangement consists of well-separated blue and red oriented pseudo-lines such that every blue and red pseudo-line intersect (and thus cross) exactly once.

\begin{figure}[ht]
\label{fig:bicolored_unstretchable}
    \centering
    \includegraphics[width=0.5\linewidth]{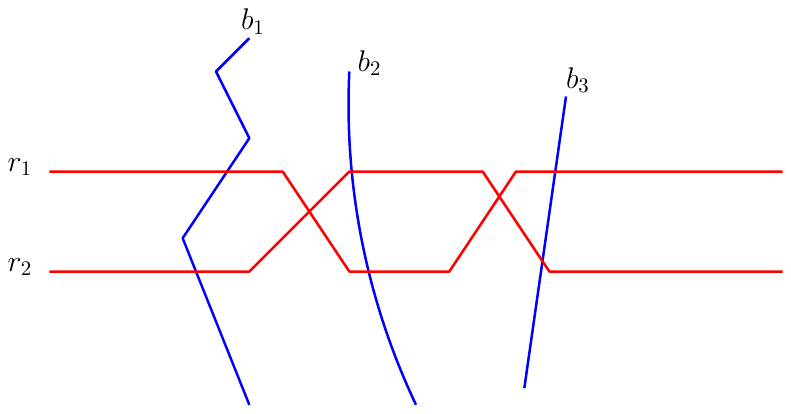}
    \caption{A well-separated rainbow arrangement in $\R^2$. Red pseudo-lines are oriented from left to right, blue pseudo-lines from top to bottom. 
    Note that in order to intersect the blue pseudo-lines in this order, the two red pseudo-lines need to cross twice. In particular, this means that there is no combinatorially equivalent arrangement using straight lines, i.e., this is a NO-instance of bicolored stretchability.}
    \label{fig:unstretchable}
\end{figure}

An interesting question is whether a given rainbow arrangement can be realized using (straight) lines. In the following, we will prove that deciding this is $\ER$-complete. Concretely, we prove that the following formalization of what we call \emph{bicolored stretchability}, is $\ER$-complete.

\begin{definition}[Bicolored Stretchability]
    Given a combinatorial description of a well-separated rainbow arrangement in $\R^2$ (by specifiying for each blue pseudo-line, the order in which it crosses the red pseudo-lines, and vice versa), \emph{bicolored stretchability} is the problem of deciding whether there exists a two-colored well-separated line arrangement with the same combinatorial description.
\end{definition}

In particular, in a YES-instance of bicolored stretchability, all colorful crossings along every line have to happen in the prescribed order. 

Observe that, given an instance of bicolored stretchability and a guess for the line arrangement (specifying each line and its color), we can efficiently verify (in a real-RAM machine) whether this guess indeed describes a well-separated line arrangement that is combinatorially equivalent to the bicolored stretchability instance (well-separation can be checked e.g.\ by comparing the slopes of the lines). Erickson, van der Hoog, and Miltzow~\cite{ericksonSmoothingGapNP2024} conveniently proved that such a verification algorithm on a real-RAM machine is enough to prove membership in $\ER$. 

\begin{figure}[ht]
    \centering
    \includegraphics[width=0.5\linewidth]{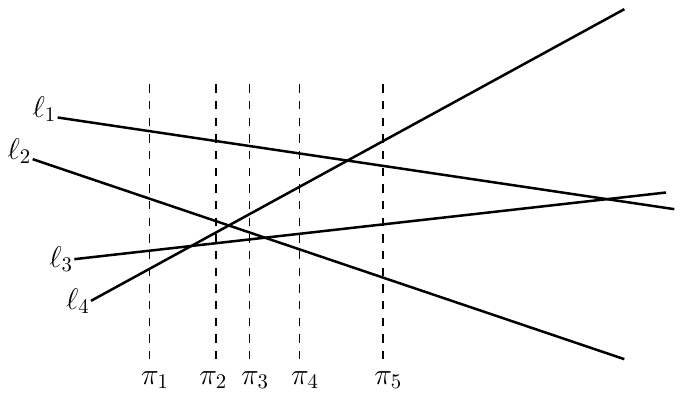}
    \caption{A YES-instance of \AS. The sequence of permutations $\pi_1, \dots, \pi_5$ can be found in the arrangement by sweeping from left to right.}
    \label{fig:allowable}
\end{figure}

In order to prove $\ER$-hardness, we will provide a reduction from the $\ER$-complete problem \emph{\AS}~\cite{hoffmannUniversalityTheoremAllowable2018}.
This problem can be formulated both in its primal form (using point sets) as well as its dual form (using line arrangements). For more details regarding this duality, see e.g.,\@~\cite{goodmanAllowableSequencesOrder1993}. The following dual formulation will be convenient for our purpose.

\begin{definition}[Allowable Sequences]
    Given a sequence of permutations $\pi_1, \dots, \pi_k$
    of $[n]$, where $\pi_{i + 1}$ differs from $\pi_i$ by a swap of two adjacent items for all $i \in [k - 1]$, \emph{allowable sequences} is the problem of deciding whether there exists a line arrangement of $n$ lines such that sweeping the arrangement from left to right and recording how the relative order of the lines changes yields a sequence of permutations that contains the sequence $\pi_1, \dots, \pi_k$.
\end{definition}

Consider now such a sequence $\pi_1, \dots, \pi_k$ of permutations of $[n]$, i.e.,\ an instance of \AS. We build an instance of bicolored stretchability as follows: We introduce $n + 4$ red pseudo-lines $r_{-1}, r_0, r_1, \dots, r_n, r_{n + 1}, r_{n + 2}$ and $2k$ blue pseudo-lines $b_1, \dots, b_k$ and $b'_1, \dots, b'_k$. We call the four red lines $r_{-1}, r_0, r_{n + 1}, r_{n + 2}$ control lines. For each $i \in [k]$, we require both $b_i$ and $b'_i$ to first intersect $r_{-1}, r_0$ in order, then $r_1, \dots, r_n$ in the order of the permutation $\pi_i$, and finally $r_{n + 1}$ and $r_{n + 2}$ in order. Conversely, for each $i \in [n]$, we require $r_i$ to intersect the $2k$ blue pseudo-lines in the order $b_1, b'_1, b_2, b'_2, \dots, b_k, b'_k$. Finally, the control lines are specified such that
\begin{itemize}
    \item $r_{-1}$ intersects the blue pseudo-lines in the order $b'_1, b_1, \dots, b'_k, b_k$,
    \item $r_0$ intersects the blue pseudo-lines in the order $b_1, b'_1, \dots, b_k, b'_k$,
    \item $r_{n + 1}$ has to first intersect $b_1, \dots, b_k$ and then $b'_1, \dots, b'_k$ in that order, 
    \item and $r_{n + 2}$ has to first intersect $b_k, \dots, b_1$ and then $b'_k, \dots b'_1$.
\end{itemize}

Clearly, this construction can be implemented in polynomial time and it remains to prove correctness. However, before getting to the correctness proof, we first observe that this indeed yields a combinatorial description of a rainbow arrangement.

\begin{figure}[ht]
    \centering
    \includegraphics[width=0.5\linewidth]{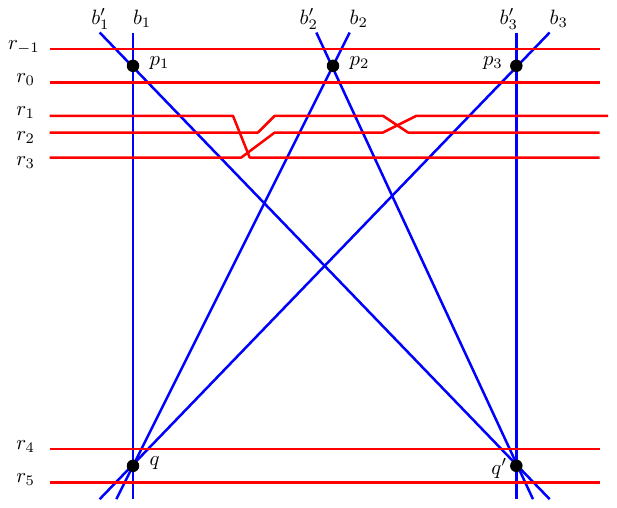}
    \caption{An illustration of the proof of \Cref{lemma:bic_stretch_1} with $k = n = 3$. Concretely, $r_{-1}, r_0, r_4, r_5$ are the four control lines.}
    \label{fig:bic_stretch_1}
\end{figure}

\begin{lemma}
\label{lemma:bic_stretch_1}
    There exists a well-separated rainbow arrangement with the combinatorial description described in the construction above. 
\end{lemma}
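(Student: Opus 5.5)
We will construct the arrangement explicitly in a "wiring diagram" style. Red pseudo-lines will be $x$-monotone curves oriented left to right, and blue pseudo-lines will be $y$-monotone curves oriented top to bottom. This is the same convention as in the figure illustrating the lemma.

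First, the blue lines. For $j=1,\dots,2k$, let the $j$-th blue line in the sequence $b_1,b'_1,b_2,b'_2,\dots,b_k,b'_k$ be the vertical line $x=j$, with one exception in the bottom part of the picture described below. The plane is divided into horizontal strips. From top to bottom they are: a strip for $r_{-1}$, a strip for $r_0$, a middle strip $S$ for $r_1,\dots,r_n$, and a bottom strip for $r_{n+1},r_{n+2}$. Each blue line crosses the strips in this order, so it automatically meets $r_{-1}$ and $r_0$ first and $r_{n+1},r_{n+2}$ last.

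The middle strip $S$ realizes the permutations. Inside $S$ we draw $r_1,\dots,r_n$ as a wiring diagram read from left to right. Around $x=2i-1$ and $x=2i$, that is, on the two verticals of $b_i$ and $b'_i$, the wires are monotone graphs whose top-to-bottom order is $\pi_i$. Between $x=2i$ and $x=2i+1$ we perform the single adjacent swap taking $\pi_i$ to $\pi_{i+1}$. Then each $b_i$ and $b'_i$, going downward, meets $r_1,\dots,r_n$ in the order $\pi_i$. Each $r_\ell$ is a graph of a function, so it meets the verticals in the order $b_1,b'_1,\dots,b_k,b'_k$, as required. Any two red wires cross at most once, but we do not even need that: red--red crossings are unconstrained. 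For $r_0$ we take a horizontal line, which meets the verticals in the order $b_1,b'_1,\dots$. For $r_{-1}$ we take a horizontal line together with small local adjustments of the blue lines in its strip: within that strip we let $b_i$ and $b'_i$ cross each other once, so that along $r_{-1}$ the order becomes $b'_1,b_1,\dots,b'_k,b_k$. Blue--blue crossings are allowed, since only colorful crossings are prescribed.

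The bottom strip is where we expect the main obstacle. Here $r_{n+1}$ must meet all unprimed lines before any primed one, and $r_{n+2}$ must meet $b_k,\dots,b_1$ and then $b'_k,\dots,b'_1$. We handle this by re-routing the blue lines below $S$, again using only blue--blue crossings. Below $S$, each $b_i$ bends left so that it ends up at position $i$, and each $b'_i$ bends so that it ends up at position $k+i$. The crossings among blue lines occur strictly between $S$ and the bottom strip. Now a horizontal $r_{n+1}$ gives the order $b_1,\dots,b_k,b'_1,\dots,b'_k$. Below $r_{n+1}$, we reverse the order within each of the two groups, again by blue--blue crossings only. A horizontal $r_{n+2}$ then gives $b_k,\dots,b_1,b'_k,\dots,b'_1$.

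Every blue line stays $y$-monotone and every red line stays $x$-monotone. Hence each red--blue pair crosses exactly once, every colorful pair intersects in a single point, and the arrangement is a rainbow arrangement. It also has finitely many cells, and $k$-fold intersections have the right dimension as long as we avoid triple points by a small perturbation.

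Finally, well-separation. Far to the left, beyond all red crossings, take points above all red lines and below all red lines. For the blue lines, use the regions far up and far down: there every blue line is vertical, so the points $x\ll 0$ and $x\gg 0$ lie on the left of all blue lines and on the right of all blue lines, respectively. Choosing the four corner regions $(x,y)\to(\pm\infty,\pm\infty)$ gives all four sign vectors, and these regions lie in unbounded cells. This last point holds because outside a large box the red lines are horizontal and the blue lines are vertical. We orient each red line so that "above" is up, and each blue line so that "above" is to the right.
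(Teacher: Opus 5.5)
Your construction is correct and has the same skeleton as the paper's. Both use horizontal control lines $r_{-1},r_0,r_{n+1},r_{n+2}$, draw $r_1,\dots,r_n$ as a wiring diagram in a strip between $r_0$ and $r_{n+1}$, and arrange the blue curves so that the prescribed orders appear on the control lines. The difference is how the blue curves are realized.

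\begin{itemize}
\item The paper draws them as straight lines: $b_i$ through $p_i$ and $q$, and $b'_i$ through $p_i$ and $q'$. Here $p_i$ lies between $r_{-1}$ and $r_0$, and $q$ lies left of $q'$ between $r_{n+1}$ and $r_{n+2}$. The four control-line orders then follow automatically from the crossing at $p_i$ and the two pencils at $q$ and $q'$. The same picture is reused in \cref{lemma:bic_stretch_2} and \cref{lemma:bic_stretch_3}.
\item You instead start from vertical lines and insert the needed blue--blue swaps by hand in the gaps between control lines.
\end{itemize}

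Your version is longer but fully explicit. It lets you check well-separation with a bounding box, since all curves are horizontal or vertical outside it. It also avoids the concurrency of $b_1,\dots,b_k$ at $q$ and of the $b'_i$ at $q'$. In the paper's drawing that concurrency has to be perturbed away when $k\ge 3$, because no three pseudo-lines of a generalized arrangement may share a point.

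One justification needs fixing. You write that, since every blue curve is $y$-monotone and every red curve is $x$-monotone, each colorful pair crosses exactly once. That inference is false in general: two nearly diagonal curves can be both $x$- and $y$-monotone and still meet many times. The correct reason is already in your construction:
\begin{itemize}
\item Each control line is horizontal, and a horizontal line meets every $y$-monotone curve exactly once.
\item Each $r_\ell$ with $\ell\in[n]$ is the graph of a function with values inside $S$. Inside $S$, every blue curve is a full-height vertical segment, and by $y$-monotonicity it enters $S$ only there. So the two meet exactly once.
\end{itemize}
State it this way and the proof is complete.
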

\begin{proof}
    This proof is illustrated in \Cref{fig:bic_stretch_1}. We start by drawing the four control lines as straight horizontal lines in the order (top to bottom) $r_{-1}, r_0, r_{n + 1}, r_{n + 2}$, making sure that the pairs $r_{-1}, r_0$ and $r_{n+ 1}, r_{n + 2}$ are relatively close to each other while there is a big gap between $r_0$ and $r_{n + 1}$. Next, we place $k$ points $p_1, \dots, p_k$ with increasing $x$-coordinates between $r_{-1}$ and $r_0$, and we place two points $q$ and $q'$ with increasing $x$-coordinates between $r_{n + 1}$ and $r_{n + 2}$. The blue lines $b_i$ and $b'_i$ are then drawn as straight lines where $b_i$ goes through $p_i$ and $q$, and $b'_i$ goes through $p_i$ and $q'$ for all $i \in [k]$. It remains to draw $r_1, \dots, r_n$ as pseudo-lines just below $r_0$. Note that just below $r_0$, the blue lines read as $b'_1, b_1, \dots, b'_k, b_k$ from left to right, which is what we need for our red pseudo-lines. Clearly, the red pseudo-lines can be drawn such as to ensure the proper intersections along each of the blue lines. It is not hard to see that this thus yields a well-separated rainbow arrangement.
\end{proof}

Next, we prove that applying the construction to a YES-instance of \AS yields a YES-instance of bicolored stretchability. The construction is similar to the proof of \Cref{lemma:bic_stretch_1}. The difference is that, by our assumption that we are given a YES-instance of \AS, we can now ensure that all red lines are straight as well.

\begin{lemma}
\label{lemma:bic_stretch_2}
    If the permutation $\pi_1, \dots, \pi_k$ can be realized by a line arrangement (with straight lines), then the corresponding instance of bicolored stretchability can also be realized with straight lines.
\end{lemma}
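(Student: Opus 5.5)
We mimic the construction of \Cref{lemma:bic_stretch_1}, but now draw the red lines $r_1,\dots,r_n$ as straight lines taken from the given realization of $\pi_1,\dots,\pi_k$, and place the blue lines as sweep positions.

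First, fix a straight-line arrangement $\ell_1,\dots,\ell_n$ realizing $\pi_1,\dots,\pi_k$. Choose $x$-coordinates $t_1<\dots<t_k$ such that the vertical line $x=t_i$ crosses the $\ell_j$ in the order $\pi_i$ (reading, say, top to bottom). Such $t_i$ exist because the sweep sequence contains $\pi_1,\dots,\pi_k$ as a subsequence, and a sweep at generic positions realizes each permutation in that order. Apply a projective transformation, or simply a shear plus scaling, so that all lines $\ell_j$ are nearly horizontal. Then add two nearly horizontal control lines $r_{-1},r_0$ far above all the $\ell_j$ within the relevant $x$-window, and $r_{n+1},r_{n+2}$ far below.

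Next, replace each vertical sweep line $x=t_i$ by two steep blue lines $b_i,b'_i$. Both pass through a common point $p_i=(t_i,y_0)$ located between $r_{-1}$ and $r_0$. The line $b_i$ goes down to a point $q$ between $r_{n+1}$ and $r_{n+2}$, and $b'_i$ goes down to a point $q'$ there, with $q$ to the left of $q'$. To make the blue lines almost vertical, place $q$ and $q'$ extremely far below; this requires the gap between $r_0$ and $r_{n+1}$ to be huge relative to the horizontal spread. With a vertical rescaling, we can equivalently make the $\ell_j$ extremely flat and the window of interest tiny in height. Because they are nearly vertical through $x\approx t_i$, both $b_i$ and $b'_i$ cross $r_1,\dots,r_n$ in the order $\pi_i$, provided they stay within a small neighbourhood of $x=t_i$ strictly between consecutive crossing events of the $\ell_j$.

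Then verify the remaining combinatorics exactly as in \Cref{lemma:bic_stretch_1}. Each $r_j$ with $j\in[n]$ meets $b_i,b'_i$ near $t_i$, and by the geometry just below $p_i$ the order is $b_i,b'_i$ (whereas on $r_{-1}$, above $p_i$, the order is $b'_i,b_i$). The fan structure through $q$ and $q'$ gives the prescribed orders on $r_{n+1}$ and $r_{n+2}$: between $r_{n+1}$ and $r_{n+2}$, the $b$'s converge to $q$, the $b'$'s converge to $q'$, and the orders reverse across the common point. Well-separation holds because the red lines are almost horizontal and the blue lines almost vertical, so far-away regions in the four quadrant directions realize all sign vectors; this can be checked via slopes.

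The main obstacle is the quantitative choice of parameters. All red lines must be close enough to horizontal, and all blue lines close enough to vertical, that every blue line crosses every red line in a bounded window. That window must lie inside the $x$-interval around $t_i$ in which the order $\pi_i$ persists. I would handle this by first fixing $\varepsilon$, the minimum distance from each $t_i$ to the $x$-coordinates of crossings of the $\ell_j$. Then I choose the blue slopes steep enough, given the vertical extent $[y_{\min},y_{\max}]$ occupied by the control lines in that window, that the horizontal deviation is below $\varepsilon$. Finally, I push $q$ and $q'$ far down to achieve this.
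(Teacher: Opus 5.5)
Your proposal is correct and is essentially the paper's construction: you keep the given lines as $r_1,\dots,r_n$, put the common points $p_i$ above the arrangement at sweep positions realizing $\pi_i$ and between $r_{-1}$ and $r_0$, and push $q$ (left) and $q'$ (right), sandwiched by $r_{n+1},r_{n+2}$, far downward so that $b_i,b'_i$ become nearly vertical and read $\pi_i$. The one thing to phrase more carefully is that $\varepsilon$-closeness to $x=t_i$ is only needed in the fixed band containing $r_{-1},r_0,r_1,\dots,r_n$, which does not depend on $q,q'$, so there is no circularity (the lines $r_{n+1},r_{n+2}$ are crossed near $q,q'$, not near $t_i$); your preliminary flattening of the $\ell_j$ is harmless but unnecessary.
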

\begin{proof}
    We take the line arrangement that realizes $\pi_1, \dots, \pi_k$ and color all $n$ lines red. Next, choose $k$ points $p_1, \dots, p_k$ above the arrangement with increasing $x$-coordinates, corresponding to the permutations $\pi_1, \dots, \pi_k$. In other words, by walking from $p_i$ vertically downward, we are guaranteed to observe the permutation $\pi_i$, for all $i \in [k]$. Moreover, we ensure that we will not observe any crossing of two red lines while walking vertically downward. This means that we could even walk at a very small angle (i.e., almost vertically) and still observe the correct permutation.
    
    Next, we choose two points $q, q'$ below the arrangement such that $q$ is to the left of $q'$. By moving both $q$ and $q'$ vertically down towards negative infinity, we can guarantee that for all $i \in [k]$, the straight blue line $b_i$ (or $b'_i$, respectively) drawn through $p_i$ and $q$ (or $q'$, respectively) observes the permutation $\pi_i$. It remains to draw the control lines:
    \begin{itemize}
        \item $r_{-1}$ is drawn as a horizontal line just above the points $p_1, \dots, p_k$,
        \item $r_0$ is drawn as a horizontal line just below $p_1, \dots, p_k$,
        \item $r_{n + 1}$ is drawn as a horizontal line just above $q, q'$,
        \item and $r_{n + 2}$ is drawn horizontally just below $q, q'$. \qedhere
    \end{itemize}   
\end{proof}

It remains to prove the other direction, i.e.,\ that a realization of the constructed instance of bicolored stretchability also implies a realization of the \AS instance.

\begin{lemma}
\label{lemma:bic_stretch_3}
    Assume that given $\pi_1, \dots, \pi_k$, the constructed instance of bicolored stretchability can be realized using straight lines. Then there also exists a straight-line realization of the \AS instance $\pi_1, \dots, \pi_k$. 
\end{lemma}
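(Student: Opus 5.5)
Given a straight-line realization of the constructed instance, the plan is to keep the $n$ red lines $r_1,\dots,r_n$ and show that a suitable sweep of them yields $\pi_1,\dots,\pi_k$ in order. The sweep will not be vertical. After a projective transformation it becomes the pencil of lines through a common point, and that point is produced by the control lines.

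First I would pin down the geometry of the blue lines using the control lines. For every $i$, the order on $r_{-1}$ (namely $b'_i$ before $b_i$) and the order on $r_0$ (namely $b_i$ before $b'_i$) differ. Hence $b_i$ and $b'_i$ cross between $r_{-1}$ and $r_0$, at a point $p_i$. Along each blue line the red crossings occur in the order $r_{-1}, r_0, \dots, r_{n+1}, r_{n+2}$. Symmetrically, $r_{n+1}$ sees $b_1,\dots,b_k$ before all of $b'_1,\dots,b'_k$, while $r_{n+2}$ sees them in reversed blocks. From this I want to derive that the lines $b_1,\dots,b_k$ are pairwise crossing between $r_{n+1}$ and $r_{n+2}$, and so are $b'_1,\dots,b'_k$. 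Each block thus forms a ``fan'' whose crossings lie in a small region $Q$ (resp.\ $Q'$) on the far side of $r_1,\dots,r_n$. Well-separation of the realization fixes the orientations, so all the blue lines have slopes in one interval and all the red lines have slopes in another.

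Next I would apply a projective transformation that sends a line separating the red arrangement region from $Q$ to the line at infinity. I would choose it so that the blue lines near $Q$ become almost parallel, that is, nearly ``vertical'' in the new coordinates. Since realizability of allowable sequences is invariant under projective maps that keep all relevant crossings in the affine part, I can then work with a realization in which the blue lines $b_1,\dots,b_k$ are nearly vertical and appear from left to right in this order. That left-to-right order is the one read off along every $r_j$, $j\in[n]$.

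The key step, and the one I expect to be the main obstacle, is to show that a topological sweep of $r_1,\dots,r_n$ can be guided by the blue lines. Along $b_i$ the red lines $r_1,\dots,r_n$ appear in the order $\pi_i$. Every $r_j$ crosses the blue lines in the order $b_1,b'_1,\dots,b_k,b'_k$, so the $b_i$ act as a monotone family of cuts through the red arrangement. I would then argue that the region between consecutive cuts $b_i$ and $b_{i+1}$ can contain only red crossings compatible with passing from $\pi_i$ to $\pi_{i+1}$. Because two lines cross at most once, a transposition of $r_a,r_b$ between the cuts happens exactly when their order differs. Consequently a straight-line sweep (after making the $b_i$ genuinely parallel, using that they all pass near the far point $Q$ and then perturbing to the pencil through a point of $Q$) records a sequence of permutations that contains $\pi_1,\dots,\pi_k$.

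The delicate point is to ensure that no red crossing lies in the ``wrong'' wedge near $Q$. This is why the control lines $r_{n+1}, r_{n+2}$ force $Q$ to lie beyond all red lines, so that rotating about a point of $Q$ is a valid sweep. The copies $b'_i$ are what let us confine each $p_i$ between $r_{-1}$ and $r_0$, and thus above all red crossings.
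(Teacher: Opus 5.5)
\textbf{Genuine gap.} Your first step matches the paper. The opposite orders of $b_i,b'_i$ on $r_{-1}$ and $r_0$ force them to cross at a point $p_i$ that lies before all of $r_1,\dots,r_n$ along both lines. You also aim for the right end structure: a pencil of lines through one point, which a projective map turns into a sweep by parallel lines. The gap is in how you produce that pencil. You take a point of the region $Q$ containing the pairwise crossings of $b_1,\dots,b_k$ and ``perturb'' each $b_i$ to pass through it, on the grounds that the $b_i$ ``all pass near'' $Q$. Neither claim holds in general:
\begin{itemize}
\item The $k(k-1)/2$ crossings of the $b_i$ are generally distinct and spread over the strip between $r_{n+1}$ and $r_{n+2}$. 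For instance, $b_1\cap b_k$ need not lie between $b_i$ and $b'_i$ for $1<i<k$.
\item A red crossing may lie arbitrarily close to $b_i$ on the side away from $b'_i$, e.g.\ a crossing realizing the step from $\pi_{i-1}$ to $\pi_i$.
\end{itemize}
So rotating $b_i$ toward an arbitrary point of $Q$ can change the order in which it meets $r_1,\dots,r_n$, and the sweep then loses $\pi_i$. Sending a line near $Q$ to infinity only makes the $b_i$ ``almost parallel'', which is not enough. The topological-sweep discussion in your key step does not fill this in either. The allowable-sequences instance needs a sweep by parallel straight lines, and that is exactly what the unjustified perturbation was supposed to supply.

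The missing idea is that $b'_i$ is a second witness of $\pi_i$, not merely a tool for locating $p_i$. Traversed from $p_i$, both $b_i$ and $b'_i$ meet $r_1,\dots,r_n$ in the order $\pi_i$. Each $r_j$ therefore crosses the triangle bounded by $b_i$, $b'_i$ and $r_{n+2}$ as a chord between the two sides through $p_i$, and these chords are pairwise non-crossing. Hence any line through $p_i$ and a point of $r_{n+2}$ between $b_i$ and $b'_i$ also sees $\pi_i$. The order $b_k,\dots,b_1,b'_k,\dots,b'_1$ on $r_{n+2}$ then gives a single point $q$, anywhere between $b_1$ and $b'_k$, that lies between $b_i$ and $b'_i$ for every $i$ at once. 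The lines $b''_i$ through $p_i$ and $q$ are concurrent and see the correct permutations. The projective transformation sending $r_{n+2}$ to infinity makes them exactly parallel, so the images of $r_1,\dots,r_n$ realize $\pi_1,\dots,\pi_k$. This is the paper's argument. Without this sandwiching and this specific choice of $q$, your construction has no guarantee that the $\pi_i$ survive.
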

\begin{proof}
    Consider the straight-line realization of the bicolored stretchability instance. By construction of the control lines $r_{-1}$ and $r_0$, $b_i$ and $b'_i$ have to cross at a point $p_i$ that lies above the lines $r_1, \dots, r_n$ for all $i \in [k]$. Moreover, we know that the control line $r_{n + 2}$ lies below the other lines and intersects the blue lines in the order $b_k, \dots, b_1, b'_k, \dots, b'_1$. In particular, let $q$ be an arbitrary point on $r_{n + 2}$ that lies in-between $b_1$ and $b'_k$. 

    Next, for each $i \in [k]$, draw a new line $b''_i$ through $p_i$ and $q$. Observe that $b''_i$ intersects $r_{n + 2}$ in $q$, and in particular $q$ lies between the intersections of $r_{n + 2}$ with $b'_i$ and $b_i$, respectively. Since $b_i$ and $b'_i$ intersect $r_1, \dots, r_n$ in the same order (given by $\pi_i$), so does $b''_i$. This means that each of the lines $b''_i$ observes the correct permutation, and they all pass through the same point $q$ that lies on $r_{n + 2}$. Thus, we can apply a projective transformation that turns $r_{n + 2}$ into the line at infinity, which in turn ensures that the lines $b''_1, \dots, b''_k$ are parallel. We conclude that the red lines $r_1, \dots, r_n$ after the transformation are a realization of the original \AS instance.   
\end{proof}

Putting all of this together, we conclude that bicolored stretchability is indeed $\ER$-complete. 

\begin{theorem}
\label{theorem:er-hardness}
    Bicolored stretchability is $\ER$-complete.
\end{theorem}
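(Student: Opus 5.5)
The proof splits into membership and hardness, and both halves are essentially assembled from what precedes the statement.

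For membership in $\ER$, I would make the verification argument explicit. The certificate is a list of real coefficients: one line per pseudo-line, together with its color and orientation. A real-RAM verifier then does three things. First, it computes all red--blue intersection points. Second, it sorts them along each line and compares the resulting orders with the prescribed combinatorial description. Third, it checks well-separation. For straight lines, well-separation can be tested by checking that the red slopes and the blue slopes occupy disjoint ranges, so that far enough away in suitable directions all four sign patterns occur in unbounded cells; equivalently, one can test the four sign vectors directly by a few linear feasibility checks. All of this takes polynomial time in the real-RAM model. By the result of Erickson, van der Hoog, and Miltzow~\cite{ericksonSmoothingGapNP2024}, the problem therefore lies in $\ER$.

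For $\ER$-hardness, I would reduce from \AS, which is $\ER$-complete by~\cite{hoffmannUniversalityTheoremAllowable2018}, using the construction given above. The steps, in order, are as follows.
\begin{enumerate}
\item Observe that the construction produces the $n+4$ red and $2k$ blue pseudo-lines, with their crossing orders, in polynomial time.
\item Invoke \Cref{lemma:bic_stretch_1} to conclude that the output is always a valid instance of bicolored stretchability, i.e.\ a combinatorial description of a well-separated rainbow arrangement, so that the reduction never produces ill-formed inputs.
\item Use \Cref{lemma:bic_stretch_2} for completeness: YES-instances of \AS map to YES-instances of bicolored stretchability.
\item Use \Cref{lemma:bic_stretch_3} for soundness: if the constructed instance is realizable by straight lines, then the \AS instance is realizable.
\end{enumerate}
Together these show the reduction is correct.

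The only point requiring care is the well-separation check on both sides of the reduction. In the membership part, the straight-line certificate must be verified to be well-separated. In the realization of \Cref{lemma:bic_stretch_2}, the constructed arrangement must actually be well-separated, and I would verify this first. The red lines there can be chosen nearly horizontal: apply a suitable affine transformation first, since \AS realizations are invariant under such transformations as long as the sweep direction is preserved. The blue lines are nearly vertical, because $q$ and $q'$ are pushed far down. The slope classes are therefore disjoint, which gives well-separation.

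Once this is settled, combining membership with the polynomial-time reduction yields $\ER$-completeness.
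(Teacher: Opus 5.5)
Your proposal is correct and is essentially the paper's proof: membership via a polynomial-time real-RAM verifier plus the result of Erickson, van der Hoog, and Miltzow, and hardness via the reduction from \AS, with \Cref{lemma:bic_stretch_1} guaranteeing a valid instance and \Cref{lemma:bic_stretch_2} and \Cref{lemma:bic_stretch_3} giving the two directions of correctness. One minor remark: the test ``red and blue slopes lie in disjoint ranges'' ignores the orientations of the lines (e.g.\ two parallel red lines with opposite positive sides pass it), so in the verifier you should rely on the direct feasibility check of the four sign vectors, which for straight lines is genuinely equivalent to well-separation.
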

\begin{proof}
    We argued that it is contained in $\ER$ by using the verification-approach in~\cite{ericksonSmoothingGapNP2024}. $\ER$-hardness follows from our reduction from allowable sequences above, where correctness follows from combining \Cref{lemma:bic_stretch_2} and \Cref{lemma:bic_stretch_3}.
\end{proof}

Certainly, \Cref{theorem:er-hardness} also implies $\ER$-hardness for the realizability problem of more general two-colored rainbow arrangements (that are not necessarily well-separated). Concretely, this also implies that realizability of bicolored order types (that were studied in, e.g.,~\cite{aichholzerBicoloredOrderTypes2024}) is $\ER$-hard. However, the well-separation condition is useful for us, as it allows us to also conclude that realizability of grid USO\footnote{A grid USO is called realizable if it is induced by an instance of the \emph{P-Matrix Generalized Linear Complementarity Problem (PGLCP)}. We do not elaborate on the details of realizability of grid USOs here and instead rely on prior work that establishes the appropriate connection with hyperplane arrangements~\cite{borzechowskiTwoChoicesAre2024}.} is $\ER$-complete.

\begin{corollary}
\label{corollary:grid_uso_hard}
    Deciding whether a given grid USO is realizable is $\ER$-complete, even in two dimensions.
\end{corollary}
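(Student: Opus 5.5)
The plan is to show membership and hardness separately, with hardness coming from a reduction from bicolored stretchability (\Cref{theorem:er-hardness}). The bridge between the two problems is the connection in~\cite{borzechowskiTwoChoicesAre2024} between grid USOs and hyperplane arrangements. In two dimensions, a grid USO with parameters $n_1,n_2$ is realizable exactly when it is the orientation $\sigma_{\mathcal{P}}$ of some well-separated two-colored point set $\mathcal{P}=(P_1,P_2)\subset\R^2$ in weak general position. Dually, it is the orientation induced by a well-separated two-colored line arrangement.

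\textbf{Membership.} I would reuse the argument of Erickson, van der Hoog, and Miltzow~\cite{ericksonSmoothingGapNP2024}. The certificate is a real-valued witness, namely a PGLCP instance, or equivalently the point set given by the connection above. A real-RAM verifier computes the induced orientation in polynomial time and compares it edge by edge with the given USO. This works in any dimension.

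\textbf{Hardness.} Given an instance of bicolored stretchability, I first build the combinatorial rainbow arrangement with red lines $H_1$ and blue lines $H_2$. From it I write down the grid orientation on $[|H_1|]\times[|H_2|]$. A vertex $(a,b)$ corresponds to the crossing point of red line $a$ and blue line $b$. The edge between $(a,b)$ and $(a',b)$ is oriented according to whether this crossing lies above or below red line $a'$, and symmetrically for edges in the other dimension. All of this information can be read off the combinatorial description. For a red line $a'$, one decides whether the crossing $r_a\cap b$ lies above or below $r_{a'}$ from the order in which blue line $b$ meets $r_a$ and $r_{a'}$, together with the orientation of $b$. Hence the orientation can be computed in polynomial time.

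This orientation is a USO. The dual form of the argument behind \Cref{lem:OrientationIsUSO} gives this for arrangements; alternatively, the existence and uniqueness part of \Cref{thm:rainbowHS} yields that every subgrid has a unique sink, since every subfamily is again a well-separated rainbow arrangement. \Cref{lemma:bic_stretch_1} guarantees that the constructed instances are valid rainbow arrangements.

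\textbf{Correctness.} It remains to show that the grid USO is realizable if and only if the stretchability instance is a YES-instance. For the forward direction, a straight-line realization of the rainbow arrangement, dualized, gives a well-separated point set whose $\sigma_{\mathcal{P}}$ is exactly our orientation. For the reverse direction, a realization of the USO gives, via~\cite{borzechowskiTwoChoicesAre2024}, a well-separated line arrangement inducing the same orientation. I then need that the orientation determines the combinatorial description.

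This reverse step is the main obstacle. I would argue as follows. The edge orientations in dimension 1 at the vertices $(\cdot,b)$ determine the order of the red lines along blue line $b$. Because of well-separation, the lines' orientations are fixed, so these comparisons are exactly the above/below relations along $b$ (up to the global orientation convention). Symmetrically, the edges in dimension 2 determine the order along each red line. Therefore any realization of the USO realizes the bicolored instance, and hardness follows.
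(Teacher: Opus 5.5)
Your proposal is correct and follows essentially the paper's route: membership via a real-RAM verifier, and hardness by reducing bicolored stretchability to two-dimensional grid USO realizability through the correspondence of \cite{borzechowskiTwoChoicesAre2024}, using that the orientation can be computed from the combinatorial description and, by well-separation, determines it. You spell out details the paper leaves implicit, with one caveat: your first justification of the USO property, dualizing the proof of \Cref{lem:OrientationIsUSO}, does not cover non-stretchable inputs because it relies on \Cref{thm:alpha_mass} for actual convex bodies, so your alternative via \Cref{thm:rainbowHS} applied to sub-arrangements is the one to use.
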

\begin{proof}
    Note that containment in $\ER$ follows again by giving a real-RAM verifier, and we will not go into the details of this. Instead, we explain why USO realizability is $\ER$-hard based on prior work: It was proven in~\cite{borzechowskiTwoChoicesAre2024} that realizable grid USOs correspond to well-separated hyperplane arrangements. This allows us to reduce bicolored stretchability to realizability of two-dimensional grid USO: Given the well-separated rainbow arrangement, construct the orientation as described in \Cref{sec:uso_proof}. This must yield a two-dimensional grid USO. If this grid USO is realizable, then by~\cite{borzechowskiTwoChoicesAre2024} there exists a well-separated line arrangement corresponding to this grid USO, which hence must have the same combinatorial description as the initial well-separated rainbow arrangement. Conversely, any well-separated line arrangement with the same combinatorial information implies that the constructed orientation is realizable. 
\end{proof}



\bibliography{references}

\appendix

\section{A Gap in the Original Proof}
\label{app:gap}
In this section, we discuss a gap in the proof of \cref{thm:alphaHS} by Steiger and Zhao~\cite{steigerGeneralizedHamSandwichCuts2010}.
We start with a few notations used in the paper.
Denote by $\conv(P_1)$ the convex hull of $P_1$.
For a hyperplane $h$, $h^+$ and $h^-$ denote the half-spaces on the positive and negative sides of $h$, respectively.
For every $x \in \conv(P_1)$, 
a hyperplane $h_x$ containing $x$ is an $(a_2, \dots, a_d)$ \emph{semi-cut}, if for every $i > 1$, it contains a point of $P_i$ and $|h^+_x \cap P_i| = a_i$.

We now recall briefly the strategy of the proof of \cref{thm:alphaHS} in \cite{steigerGeneralizedHamSandwichCuts2010}.
The proof is by induction on $d$.
For the inductive step, the paper presents the following lemma (called Lemma~1 in the paper).

\begin{longlemma}
\label{lem:app_lem1}
Given $x \in \conv(P_1)$ and $(a_2, \dots , a_d)$, if there is an $(a_2, \dots , a_d)$ semi-cut
$h_x$, then it is unique.
\end{longlemma}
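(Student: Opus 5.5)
The plan is to reduce the statement to the $(d-1)$-dimensional $\alpha$-Ham-Sandwich theorem. The reduction is a central projection from $x$, under which hyperplanes through $x$ become hyperplanes in $\R^{d-1}$. Uniqueness in \cref{thm:alphaHS}, which is already established in Section~\ref{sec:uso_proof}, or equivalently the induction hypothesis of Steiger and Zhao, then gives uniqueness of the semi-cut.

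First I translate so that $x$ is the origin; hyperplanes through $x$ are then linear hyperplanes $\{g=0\}$ for linear functionals $g$. The key geometric step is to show that well-separation of $(P_1,\dots,P_d)$ together with $x\in\conv(P_1)$ yields linear separators through $x$. Fix $I\subseteq\{2,\dots,d\}$ and write $A=\bigcup_{i\in I}P_i$ and $B=\bigcup_{i\in\{2,\dots,d\}\setminus I}P_i$.
\begin{itemize}
\item Well-separation gives an affine functional $f_1$ that is positive on $P_1\cup A$ and negative on $B$. It also gives an affine functional $f_2$ that is positive on $A$ and negative on $P_1\cup B$.
\item Since $x\in\conv(P_1)$, we get $f_1(x)>0>f_2(x)$.
\item Hence some convex combination $g=\lambda f_1+(1-\lambda)f_2$ satisfies $g(x)=0$, is positive on $A$ and negative on $B$.
\end{itemize}
So every such $g$ is a linear functional, and $\{g=0\}$ is a hyperplane through $x$ separating $A$ from $B$. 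The same trick with $I=\{2,\dots,d\}$ gives a linear functional $g_0$ that is strictly positive on all of $P_2\cup\dots\cup P_d$. For this case I combine the functional separating $P_{\geq 2}$ from $P_1$ with an affine functional positive everywhere on the finite point set.

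Next I centrally project each point $p\in P_2\cup\dots\cup P_d$ to $p/g_0(p)$ on the affine hyperplane $\{g_0=1\}\cong\R^{d-1}$. Because every $g_0(p)>0$, a linear hyperplane $\{g=0\}$ corresponds to the affine hyperplane $\{g=0\}\cap\{g_0=1\}$. Moreover, the sign of $g$ at $p$ equals its sign at the projected point. Thus the separators from the previous paragraph show that the projected family $(P'_2,\dots,P'_d)$ is well-separated in $\R^{d-1}$.

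I must also check that the orientation conventions match. The semi-cut $h_x$ through $x,p_2,\dots,p_d$ is oriented by the determinant with $x$ as first row. After translating $x$ to $0$, this is the $d\times d$ determinant $\det(p_2,\dots,p_d,q)$. Rescaling each row by the positive number $1/g_0(\cdot)$ does not change its sign, and in the chart $\{g_0=1\}$ this determinant equals, up to one global sign, the $(d-1)$-dimensional orientation determinant of the projected points. Hence "above $h_x$" corresponds to "above" (or uniformly "below") the projected hyperplane. Consequently an $(a_2,\dots,a_d)$ semi-cut corresponds bijectively to an $(\alpha_2,\dots,\alpha_d)$-cut of the projected family, with $\alpha_i=a_i+1$ or $\alpha_i=|P_i|-a_i$ depending on that global sign. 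Two distinct semi-cuts project to two distinct cuts, because the chosen points $p_2,\dots,p_d$ (hence the hyperplane) would differ. Uniqueness in \cref{thm:alphaHS} in dimension $d-1$ then finishes the argument.

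The main obstacle is weak general position of the projected family. A hyperplane through $x$ and one point of each $P_i$ ($i\ge2$) might contain a further point, since $x$ is an arbitrary point of $\conv(P_1)$ rather than a point of $P_1$. I would first try to argue that such degeneracies cannot occur at a semi-cut, or else handle them by a perturbation argument. The idea is to move $x$ slightly within $\conv(P_1)$ so that the counts defining the two putative semi-cuts persist while weak general position holds, and then derive the contradiction there. If this fails, the lemma genuinely needs an extra general-position hypothesis on $x$. That would itself be a candidate for the gap in the original argument.
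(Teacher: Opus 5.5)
The paper does not prove this lemma. It quotes it from Steiger and Zhao, and the gap the paper points out is in the later step, where $h_x$ and $h_y$ might not meet in $\conv(P_1)$. So your argument has to stand on its own, and it has a genuine gap at exactly the place you flag.

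Your separators through $x$, the central projection and the orientation bookkeeping are correct. The degenerate case, however, cannot be set aside. The lemma is claimed for \emph{every} $x\in\conv(P_1)$, and Steiger and Zhao apply it to arbitrary points of $h_x\cap h_y\cap\conv(P_1)$. For such $x$, a hyperplane through $x,p_2,\dots,p_d$ may contain further points of $P_2\cup\dots\cup P_d$. Two points of one $P_j$ may even lie on a common ray from $x$ and project to the same point. Then \cref{thm:alphaHS} does not apply to the projected family. Your dictionary $\alpha_i=a_i+1$ or $\alpha_i=|P_i|-a_i$ also fails, since a semi-cut can contain several points of $P_i$.

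Your remedy of moving $x$ is not justified. Both semi-cuts move with $x$, and whether an extra point on $h$ lands on the counted or uncounted side depends on the direction of motion. The demands coming from $h$, from $h'$ and from different extra points can therefore conflict. Your fallback conclusion is also wrong: no extra hypothesis on $x$ is needed.

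The missing step is to perturb the point sets while holding $x$ and both putative semi-cuts fixed. Let $h\neq h'$ be $(a_2,\dots,a_d)$ semi-cuts through $x$.

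For each $i\geq 2$, choose $q_i\in h\cap P_i$ and $q_i'\in h'\cap P_i$, with $q_i=q_i'$ whenever $h\cap h'\cap P_i\neq\emptyset$. Then $q_i\notin h'$ and $q_i'\notin h$ unless $q_i=q_i'$.

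Push every other point lying on $h$ or $h'$ slightly into the uncounted open side of each of these hyperplanes it lies on. For a point of $h\cap h'$, the two open sides meet in a nonempty wedge at that point. A point on only one of them stays on its strict side of the other. For small enough pushes, this changes no count, no orientation, and preserves well-separation.

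Then perturb generically: $x$ stays fixed, $q_i$ moves only inside $h$, $q_i'$ only inside $h'$, common points only inside $h\cap h'$, and all remaining points move freely. A short case check using $h\neq h'$ shows that no hyperplane through $x$ and one point of each $P_i$ then contains a further point. So $(\{x\},P_2,\dots,P_d)$ is in weak general position, while $h,h'$ remain distinct semi-cuts with equal counts. Your projection argument now gives the contradiction.

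Since you are using \cref{thm:alphaHS} from Section~\ref{sec:uso_proof} anyway, you could skip the projection entirely. Apply the theorem in dimension $d$ to $(\{x\},P_2,\dots,P_d)$, whose cuts are exactly the semi-cuts through $x$. The projection matters only if you want to rely on nothing beyond the $(d-1)$-dimensional induction hypothesis.
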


After that, the proof continues with the following paragraph~\cite[Page 539]{steigerGeneralizedHamSandwichCuts2010}, which we quote verbatim (except for changing the label of Lemma 1 to \cref{lem:app_lem1}).

\begin{quote}
    \small
    To advance the induction, fix $(a_1, . . . , a_d)$ and suppose that $h_x$ is a cut with these
values, $x \in P_1$. 
    By \cref{lem:app_lem1}, it is the unique semi-cut containing $x$, so suppose that there is an $(a_2, \dots , a_d)$ semi-cut $h_y$ through $y \in P_1$, $y \notin h_x$. Hyperplanes $h_x$ and $h_y$ cannot meet in $\conv(P_1)$ since any such point would be in two different $(a_2, \dots ,a_d)$ semi-cuts, violating \cref{lem:app_lem1}. But this implies that $a_1 = |P_1 \cap h^+_y|$: if $y \in h^+_x$, so is every $z \in P_1 \cap h^+_x$; if $y \in h^-_x$, so is every $z \in P_1 \cap h^-_x$. Therefore $h_x$ is unique $[\dots]$.
\end{quote}

The core purpose of this paragraph is to show that the $(a_1, \dots, a_d)$-cut must be unique:
If there exist two distinct points $x, y \in P_1$ such that the corresponding $(a_2, \dots , a_d)$ semi-cuts $h_x$ and $h_y$ are $(a_1, \dots, a_d)$-cuts, then we can arrive at a contradiction, regardless of whether $h_x$ and $h_y$ intersect in $\conv(P_1)$ or not.

We have two issues with the second last sentence of the paragraph.
Firstly, we believe that there is a typo: The point $z$ should be defined as a point in $P_1 \cap h^+_y$ in the first case and a point in $P_1 \cap h^-_y$ in the second case.
In other words, in our opinion, the intended argument here is that one of $h_x^+ \cap P_1$ and $h_y^+ \cap P_1$ must be a strict subset of the other, and hence, both $h_x$ and $h_y$ cannot have the same number points of $P_1$ above them.
Secondly, we believe this intended argument is wrong or at least incomplete.
Consider the example in \cref{fig:counterexample}.
The set $P_1$ here comprises exactly two points $x$ and $y$.
Hence, $h_x$ and $h_y$ do not intersect in $\conv(P_1)$ in this case.
Note that the two hyperplanes are oriented correctly in the figure, since both are oriented ``from blue to red, then towards the right half.''
Certainly, here, these two hyperplanes are not semi-cuts of the same values for the blue point set, which could be an additional argument to yield a contradiction.
However, we fail to see how to show this argument generally, when \cref{lem:app_lem1} does not apply to this setting.

One way to fix this proof is to show that if $h_x \cap h_y$ does not contain a point of $\conv(P_1)$ then this intersection must contain a point that \emph{could} be added to $P_1$ without destroying the well-separation assumption.
However, we are unable to find an easy proof for this.

\begin{figure}[ht!]
    \centering
    \includegraphics{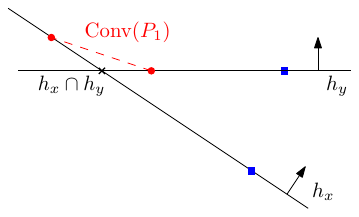}
    \caption{Example highlighting the issue in the proof of \cite{steigerGeneralizedHamSandwichCuts2010}. Red points are two disks on the left, while blue points are two squares on the right.}
    \label{fig:counterexample}
\end{figure}
\end{document}